\documentclass[11pt]{amsart}
\usepackage{amsmath,amssymb,amsthm,pinlabel,tikz,hyperref,mathrsfs,color, thmtools}
\usepackage{verbatim}
\usepackage{fullpage}
\usepackage{float}
\usepackage{caption}
\usepackage{subcaption}
\usepackage{enumitem}
\newcommand{\nc}{\newcommand}
\nc{\dmo}{\DeclareMathOperator}
\dmo{\ra}{\rightarrow}
\dmo{\Prob}{\mathbb{P}}
\dmo{\E}{\mathbb{E}}
\dmo{\N}{\mathbb{N}}
\dmo{\Z}{\mathbb{Z}}
\dmo{\Q}{\mathbb{Q}}
\dmo{\R}{\mathbb{R}}
\dmo{\C}{\mathcal{C}}
\dmo{\X}{\mathcal{X}}
\dmo{\U}{\mathcal{U}}
\dmo{\T}{\mathcal{T}}
\dmo{\F}{\mathcal{F}}
\dmo{\AC}{\mathcal{AC}}
\dmo{\AxN}{\mathrm{[id, \varphi^{N}]}}
\dmo{\AxCN}{\Upsilon_{N}}

\dmo{\w}{\omega}
\dmo{\MIN}{\mathcal{MIN}}
\dmo{\Mod}{Mod}
\dmo{\PMod}{PMod}
\dmo{\PMF}{\mathcal{PMF}}
\dmo{\Mat}{Mat}
\dmo{\supp}{supp}
\dmo{\UE}{\mathcal{UE}}
\dmo{\vol}{vol}
\dmo{\B}{B}
\dmo{\PB}{PB}
\dmo{\PR}{PSL(2,\mathbb{R})}
\dmo{\GL}{GL(k, \mathbb{C})}
\dmo{\SL}{SL(2, \mathbb{Z})}
\dmo{\Isom}{Isom}
\dmo{\RP}{\mathbb{R} \mathrm{P}}
\dmo{\I}{\mathcal{I}}
\dmo{\el}{\ell_{\C}}
\dmo{\NN}{\mathcal{N}}
\dmo{\rk}{rank}
\dmo{\tr}{tr}
\dmo{\llangle}{\langle\langle}
\dmo{\rrangle}{\rangle\rangle}
\dmo{\Unif}{Unif}
\dmo{\Out}{Out}
\dmo{\Aut}{Aut}
\dmo{\Proj}{\operatorname{Proj}}
\dmo{\sumRho}{\mathcal{N}}
\dmo{\stopping}{\vartheta}
\dmo{\diam}{\operatorname{diam}}

\usetikzlibrary{decorations.markings, patterns}
\tikzset{->-/.style={decoration={
  markings,
  mark=at position #1 with {\arrow{>}}},postaction={decorate}}}

\nc{\nt}{\newtheorem}

\nt{theorem}{Theorem}

\newtheorem{thm}{{\bf Theorem}}[section]
\newtheorem{lem}[thm]{{\bf Lemma}}
\newtheorem{cor}[thm]{{\bf Corollary}}
\newtheorem{prop}[thm]{{\bf Proposition}}
\newtheorem{fact}[thm]{Fact}

\newtheorem{claim}[thm]{Claim}

\newtheorem{question}[thm]{Question}

\newtheorem{dfn}[thm]{Definition}
\numberwithin{equation}{section}

\title[Counting WPD elements]{Acylindrically hyperbolic groups and counting problems}

\date{\today}
\author{Inhyeok Choi}

\address{%
		Cornell University\\
		310 Malott Hall, Ithaca, NY, USA \\ \newline
		June E Huh Center for Mathematical Challenges, KIAS\\
		85 Hoegi-ro, Dongdaemun-gu, Seoul 02455, South Korea
}
\email{
        inhyeokchoi48@gmail.com
        }

\begin{document}
\begin{abstract}
We show that Morse elements are generic in acylindrically hyperbolic groups. As an application, we observe that fully irreducible outer automorphisms are generic in the outer automorphism group of a finite-rank free group.

\noindent{\bf Keywords.} Acylindrical action, Morse, weak proper discontinuity, counting problem, genericity

\noindent{\bf MSC classes:} 20F67, 30F60, 57K20, 57M60, 60G50
\end{abstract}

\maketitle

%
%

\section{Introduction}	\label{sec:introduction}

In non-positively curved manifolds and groups, certain geodesics or group elements exhibit  hyperbolicity. A quasi-geodesic $\gamma$ is said to be \emph{Morse} if every quasi-geodesic of uniform quality connecting points on $\gamma$ lies in a common neighborhood of $\gamma$. A group element $g$ is called a \emph{Morse element} if its orbit $\{g^{i}\}_{i \in \Z}$ is an unbounded Morse quasi-geodesic in the group.

In globally hyperbolic spaces such as  CAT($-1$) spaces and Gromov hyperbolic spaces, every geodesic is Morse (of uniform quality). This corresponds to the fact that every infinite-order element in a word hyperbolic group is loxodromic and is Morse. Furthermore, ``most'' elements in a  word hyperbolic group are Morse. To formulate this, given a group $G$ and its generating set $S$, let $B_{S}(n)$ be the collection of group elements whose $S$-word length is at most $n$. We can ask if the proportion of Morse elements in $B_{S}(n)$ tends to 1 as $n$ tends to infinity. This is indeed the case when $G$ is an infinite word hyperbolic group \cite{daniGenericity}, \cite{gekhtman2018counting}, \cite{yang2020genericity}.

Morse elements are found in many other groups with flat parts. One classic example is the mapping class group $\Mod(\Sigma)$ of a finite-type hyperbolic surface $\Sigma$, whose Morse elements are precisely pseudo-Anosov mapping classes. In \cite{choi2024counting}, the author proved that the asymptotic density of pseudo-Anosovs in the mapping class group is 1. We establish a similar result for the class of \emph{acylindrically hyperbolic groups}. Our main theorem is:

\begin{theorem}\label{thm:main}
Let $G$ be an acylindrically hyperbolic group. Then for any finite generating set $S$ of $G$, we have \[
\lim_{n\rightarrow +\infty} \frac{\# \{ g \in B_{S}(n) : \textrm{$g$ is Morse} \} }{\#B_{S}(n)} = 1.
\]
\end{theorem}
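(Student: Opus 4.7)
The plan is to combine the structure theory of loxodromic WPD (equivalently, generalized loxodromic, equivalently Morse) elements in acylindrically hyperbolic groups with a Schottky/pivoting counting argument in the Cayley graph $\Gamma = \mathrm{Cay}(G, S)$, in the paradigm of Yang \cite{yang2020genericity} but adapted to the setting of a (non-proper) acylindrical action.

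First, since $G$ is acylindrically hyperbolic, I would fix an acylindrical action $G \curvearrowright X$ on a hyperbolic space admitting a loxodromic, hence WPD, element $h_0$. A standard ping-pong argument in $\partial X$ then produces two independent loxodromic WPD elements $h_1, h_2 \in G$. After passing to sufficiently large powers, the set $F = \{h_1^{\pm N}, h_2^{\pm N}\}$ generates a free subgroup acting by ping-pong on $X$; by results of Sisto and Osin on generalized loxodromic elements, each element of $F$ is Morse in $\Gamma$, and any reduced word in $F$ traces a uniform Morse quasi-geodesic in $\Gamma$ with Morse gauge depending only on $N$ and the acylindricity data.

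The core of the proof is a \emph{growth-gap} argument. Call $g \in G$ \emph{Schottky-laden} if some $S$-geodesic representative of $g$ contains a subword that, as a group element, is a sufficiently long reduced word in $F$; the previous paragraph then forces every Schottky-laden $g$ to be Morse. It therefore suffices to show that the set $\mathcal{B} \subset G$ of non-laden elements satisfies $\#(\mathcal{B} \cap B_S(n)) = o(\#B_S(n))$. I would establish this via a pivoting construction: given $g \in \mathcal{B} \cap B_S(n)$ and many candidate positions along its $S$-geodesic, form new elements of $B_S(n + C)$ by inserting a short Schottky word at each chosen position. The essential input is injectivity, namely that distinct insertions produce distinct group elements; this is where WPD/acylindricity enters crucially, via quasi-convexity of the $h_i$-axes in $X$ and bounds on how long two distinct axes can fellow-travel.

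The main obstacle is precisely this injectivity, and the consequent \emph{strict} inequality of exponential growth rates. In the word-hyperbolic case one can invoke Arzhantseva--Lysenok style growth tightness directly, but here the acylindrical space $X$ is not proper, so potential coincidences between different Schottky insertions must be ruled out by hand, using the acylindricity constants together with Behrstock-type inequalities for projections onto the axes of the $h_i$. Once this gap is in place, combining it with the ``Schottky-laden $\Rightarrow$ Morse'' implication shows that $\#(\mathcal{B} \cap B_S(n))/\#B_S(n) \to 0$, which is equivalent to the stated density-$1$ claim.
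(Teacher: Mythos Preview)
Your proposal contains a genuine gap at the step ``Schottky-laden $\Rightarrow$ Morse''. Having an $S$-geodesic representative that contains a long Schottky subword says nothing about the element $g$ itself being Morse: Morseness is a statement about the entire orbit $\{g^{i}\}_{i\in\Z}$, and a single Schottky segment in one spelling of $g$ gives no control over $g^{2}, g^{3}, \ldots$ (consider $g = a w b$ with $w$ Schottky and $a, b$ unrelated). The paper confronts exactly this issue. It does not merely ask that $[x_{0}, gx_{0}]$ contain many aligned $\varphi$-segments (the set $\mathcal{V}_{L,\epsilon}$), but defines a smaller set $\mathcal{W}_{L,\epsilon}$ where the aligned segments are additionally compatible with $g^{-1}x_{0}$ and $g^{2}x_{0}$; this is what allows the $g$-translates of the segments to concatenate into an aligned bi-infinite sequence, forcing $g$ to be a WPD loxodromic on $X$ and hence Morse (Lemma~\ref{lem:WUseful}). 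Showing that $\mathcal{V}\setminus\mathcal{W}$ is negligible (Lemma~\ref{lem:VMinusW}) is a separate conjugacy-type argument that your outline does not address.

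There is a second, related gap in your growth-gap step. You aim for a \emph{strict} inequality of exponential growth rates between $\mathcal{B}$ and $G$, in the style of Yang \cite{yang2020genericity}. But Yang's argument relies on the \emph{strongly contracting} property of the Schottky axes in the Cayley graph, which is precisely what is not available here: for a bare WPD loxodromic one only knows that the function $f(D,M)$ of Subsection~\ref{subsection:compare} is finite, with no control on its growth in $M$. This is why the paper does \emph{not} obtain exponential decay of the non-Morse density (cf.\ Questions~\ref{qn:exponential}--\ref{qn:exponentialOut}); instead, the pivoting map in Lemma~\ref{lem:bad} is shown to have fibers of size $O(\epsilon n)$ via Proposition~\ref{prop:subContN}, yielding only $\#\mathcal{BAD}_{L,\epsilon}(n)/\#B_{S}(n) \lesssim \epsilon$, and the conclusion follows by sending $\epsilon\to 0$. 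Your injectivity discussion gestures at the right difficulty but does not supply the replacement for strong contraction (Lemma~\ref{lem:amplifyWPD} and Proposition~\ref{prop:subContN}) that makes the fiber bound work.
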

This generalizes W. Yang's result on groups with strongly contracting element \cite{yang2020genericity}. This can be also compared with A. Sisto's theorem that simple random walks on acylindrically hyperbolic groups favor Morse elements (\cite[Theorem 1.6]{sisto2018contracting}). In fact, non-elementary random walks on any Gromov hyperbolic space favor loxodromics \cite{calegari2015scl}, \cite{maher2018random}, but one cannot hope such a result for counting problems (see the following subsections).

In view of the equivalent definitions of acylindrically hyperbolic groups in \cite{osin2016acylindrically} (especially in relation to \cite{bestvina2002bounded}), Theorem \ref{thm:main} is a restatement of the following more explicit theorem.

\begin{thm}\label{thm:mainWPD}
Let $G$ be a group generated by a finite set $S \subseteq G$. Suppose that $G$ acts on a Gromov hyperbolic space $X$ and that there exists $g \in G$ that is a loxodromic isometry of $X$ with the WPD property (cf. Definition \ref{dfn:WPD}). Then for any $M>0$, we have \[
\lim_{n\rightarrow +\infty} \frac{\# \big\{ g \in B_{S}(n) : \textrm{$g$ is WPD loxodromic and satisfies $\tau_{X}(g) > M$} \big\} }{\#B_{S}(n)} = 1.
\]
\end{thm}

Indeed, if $g \in G$ serves as a WPD loxodromic on a Gromov hyperbolic space, then $g$ is a Morse element in $G$ (\cite[Theorem 1]{sisto2016quasi-convexity}, \cite[Theorem 1.4]{osin2016acylindrically}).

We note a theorem by B. Wiest \cite{wiest2017on-the-genericity} that was applied to the mapping class group by M. Cumplido and B. Wiest \cite{cumplido2018pA}: for any finitely generated group $G$ having a non-elementary action on a Gromov hyperbolic space, the density of loxodromics is bounded away from 0. Hence, the main point of Theorem \ref{thm:main} and \ref{thm:mainWPD} is that the density has limit 1. Such a claim does not hold for general non-elementary actions.

Two important examples of acylindrically hyperbolic groups beyond hierarchically hyperbolic groups (HHGs) are $\Out(F_{N})$ and $\Aut(F_{N})$, the outer automorphism group and the automorphism group of the free group of rank $N \ge 3$. Theorem \ref{thm:main} tells us that most elements are Morse in large word metric balls in these groups. 

We can say more by focusing on a specific $\Out(F_{N})$-action, namely, the one on the free factor complex $\mathcal{FF}_{N}$ studied by M. Bestvina and M. Feighn \cite{bestvina2014hyperbolicity}. Bestvina and Feighn proved that: \begin{enumerate}
\item $\mathcal{FF}_{N}$ is Gromov hyperbolic,
\item the elements of $\Out(F_{N})$ that are loxodromic isometries of $\mathcal{FF}_{N}$ are precisely the fully irreducible outer automorphisms, and 
\item every fully irreducible outer automorphism has the WPD property.
\end{enumerate}
We have:

\begin{thm}\label{thm:mainGen}
Let $G = \Out(F_{N})$ be the outer automorphism group of the free group of rank $N$ for some $N\ge 2$. Then for any finite generating set $S$ of $G$, we have \[
\lim_{n\rightarrow +\infty} \frac{\# \{ g \in B_{S}(n) : \textrm{$g$ is an ageometric triangular fully irreducible element} \} }{\#B_{S}(n)} = 1.
\]
\end{thm}

This is a counting version of Kapovich--Maher--Pfaff--Taylor's result that random walks on $\Out(F_{N})$ favor ageometric triangular fully irreducibles \cite[Theorem A]{kapovich2022random}. There are also versions of random walk theory on $\Aut(F_{N})$ and $\Out(F_{N})$ using ``non-backtracking'' paths (\cite{kaimanovich2007subadditive}, \cite{kapovich2015a-train}). In particular, I. Kapovich and C. Pfaff proved that non-backtracking random walks favor geometric triangular fully irreducibles as well.

We record a cute application to the mapping class group $\Mod(\Sigma)$. It seems hard to apply the method of \cite{choi2024counting} to general non-elementary subgroups of $\Mod(\Sigma)$. However, since they all act on the curve complex $\mathcal{C}(\Sigma)$ with a WPD loxodromic element, we observe that: 

\begin{cor}\label{cor:mainMCG}
Let $G \le \Mod(\Sigma)$ be a non-elementary subgroup of the mapping class group and let $S$ be a finite generating set of $G$. Then for any $M>0$, we have \[
\lim_{n \rightarrow +\infty} \frac{ \# \big\{ g \in B_{S}(n) : \textrm{$g$ is pseudo-Anosov with stretch factor $\ge M$}\big\}}{\#B_{S}(n)} = 1.
\]
\end{cor}
In particular, pseudo-Anosovs are generic in the Torelli group. This generalizes the result of I. Gekhtman, S. Taylor, and G. Tiozzo regarding word hyperbolic groups acting on a Gromov hyperbolic space \cite[Theorem 1.12]{gekhtman2018counting}.

\subsection{Comparison with other groups} \label{subsection:compare}

To better illustrate Theorem \ref{thm:mainWPD}, let us compare four groups that act on a Gromov hyperbolic space: the free group $F_{2}$ of rank 2, the mapping class group $\Mod(\Sigma)$, the outer automorphism group $\Out(F_{N})$ of the free group of rank $n \ge 2$, and the direct product $F_{2} \times F_{3}$ of two free groups. All of these act on some Gromov hyperbolic space.

Let $G$ be a group acting on a hyperbolic space $X$ and let $S$ be a finite generating set of $G$. 
Given a group element $g \in G$ and a basepoint $x_{0} \in X$, let us define a function $f: \mathbb{R}^{2} \rightarrow \mathbb{R} \cup \{+\infty\}$ as follows. For any $M$-short word metric geodesic $[a, b] \subseteq G$, if $a$ and $b$ are $D$-apart along $\{g^{i} x_{0}\}_{i \in \Z}$, then $[a,b]$ must pass through an $f(D, M)$-neighborhood of $\{g^{i}\}_{i \in \Z}$ \emph{in $G$}.

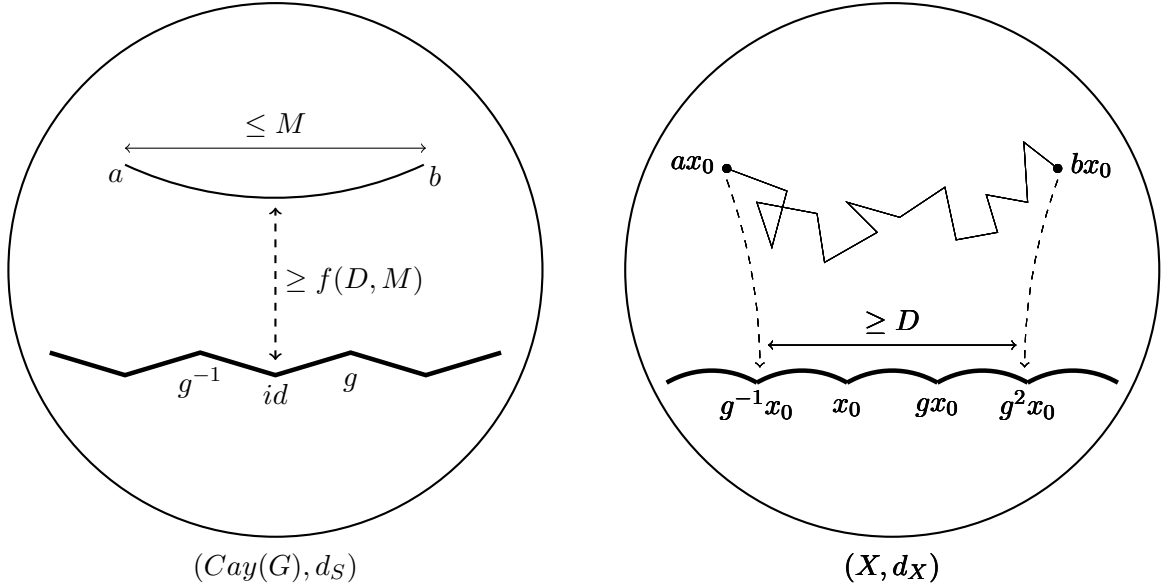
\begin{figure}
\begin{tikzpicture}

\draw[thick] (0, 0.1) circle (3.55);
\draw[line width=0.6mm] (-3, -1)--(-2, -1.3) -- (-1, -1) -- (0, -1.3) -- (1, -1) -- (2, -1.3) -- (3, -1);
\draw[thick] (-2, 1.5) arc (-115:-65:4.7);
\draw[dashed, thick, <->] (0, -1.1) -- (0, 0.93);

\draw (0, -1.55) node {$id$};
\draw (1, -1.4) node {$g$};
\draw (-1, -1.4) node {$g^{-1}$};

\draw (1.05, -0.05) node {$\ge f(D, M)$};
\draw[<->] (-2, 1.72) -- (2, 1.72);
\draw (0, 2.03) node {$\le M$};
\draw (-2.12, 1.35) node {$a$};
\draw (2.12, 1.35) node {$b$};

\draw (0, -3.86) node {$(Cay(G), d_{S})$};

\begin{scope}[shift={(8.2, 0)}]
\draw[thick] (0, 0.1) circle (3.55);

\foreach \i in {-2, ..., 2}{
\draw[line width=0.6mm] (1.2*\i-0.6, -1.4) arc (120:60:1.2);
\draw (-2.2, 1.45) -- (-1.4, 1.15) -- (-1.6, 0.4) -- (-1.8, 1) -- (-1, 0.85) -- (-0.9, 0.2) -- (-0.2, 0.6) -- (-0.6, 1) -- (0.1, 0.8) -- (0.7, 1.2) -- (0.85, 0.5) -- (1.4, 0.6) -- (1.25, 1.1) -- (1.8, 1) -- (1.75, 1.8) --  (2.2, 1.45);

\draw[dashed, ->] (-2.2, 1.3) arc (20:0:7.3);

\draw[dashed, ->] (2.2, 1.3) arc (160:180:7.3);

\fill (-2.2, 1.45) circle (0.06);
\fill (2.2, 1.45) circle (0.06);
\draw[<->] (-1.65, -0.9) -- (1.65, -0.9);

\draw (0.6, -1.73) node {$gx_{0}$};
\draw (-0.6, -1.73) node {$x_{0}$};
\draw (-1.8, -1.7) node {$g^{-1}x_{0}$};
\draw (1.8, -1.7) node {$g^{2}x_{0}$};

\draw (0, -0.6) node {$\ge D$};

\draw (-2.65, 1.5) node {$ax_{0}$};
\draw (2.65, 1.5) node {$bx_{0}$};

\draw (0, -3.86) node {$(X, d_{X})$};
}

\end{scope}

\end{tikzpicture}
\caption{Schematics for $f(D, M)$ in Subsection \ref{subsection:compare}}
\label{fig:schem}
\end{figure}

First, $F_{2}$ has proper action on its own Cayley graph $Cay(F_{2})$. This implies that any coarse stabilizer of $v \in F_{2}$ is finite. Furthermore, each $g \in F_{2} \setminus \{id\}$ has the so-called \emph{strong contracting property}: if a geodesic $[a, b] \subseteq F_{2}$ makes nontrivial progress along $\{g^{i} \}_{i \in \Z}$, then $[a, b]$ passes through a bounded neighborhood of $\{g^{i}\}_{i \in \Z}$. In other words, $f(D, M)$ is constant in $M$ for large enough $D$.

Second, $\Mod(\Sigma)$ acts on the ambient curve complex $\mathcal{C}(\Sigma)$ and tuples of subsurface curve complexes $\mathcal{C}(U), U \subsetneq \Sigma$. Fixing a simple closed curve $x_{0} \in \mathcal{C}(\Sigma)$, each $g \in \Mod(\Sigma)$ gives rise to shadows $d_{U}(x_{0}, gx_{0})$ on various $\{\mathcal{C}(U) : U \subseteq \Sigma\}$, using which the word metric on $\Mod(\Sigma)$ can be (coarsely) estimated via distance formula \cite{masur2000curve}. One consequence of the distance formula and is the \emph{weakly contracting property} of pseudo-Anosov orbits \cite{behrstock2006asymptotic}, \cite{duchin2009divergence}. Explicitly, for each pseudo-Anosov mapping class $g$,  there exists $\epsilon>0$ such that if an $M$-short geodesic $[a, b] \subseteq \Mod(\Sigma)$ makes progress $D$ along $\{g^{i}\}_{i \in \Z}$, then $[a, b]$  passes through a $f(D, M) := (M \cdot e^{- \epsilon D})$-neighborhood of $\{g^{i}\}_{i \in \Z}$. 

There is no direct analogue of the distance formula for $\Out(F_{N})$. As a result, we do not know whether fully irreducible outer automorphisms (which are analogues of pseudo-Anosovs) have the weakly contracting property on the Cayley graph of $\Out(F_{N})$. However, every fully irreducible outer automorphism $g$ has the WPD property (for various hyperbolic actions, cf. \cite{bestvina2010hyp}, \cite{mann2014some}, \cite{bestvina2014hyperbolicity}), i.e., the joint coarse stabilizer of $g^{i}$ and $g^{j}$ is finite when $|i-j|$ is large. This leads to an implicit contracting property, i.e., for every $D$ and $M$ the value of $f(D, M)$ is finite.

Finally, consider a trivial projection of $F_{2} \times F_{3}$ onto the first factor $F_{2}$. This gives rise to a natural action of $F_{2} \times F_{3}$ on $Cay(F_{2})$. This action has not only a large point stabilizer, but also a large global stabilizer. Namely, $\{id\} \times F_{3}$ acts trivially on $Cay(F_{2})$. In addition, there is no contraction along loxodromics on $F_{2}$, i.e., $f(D, M) = +\infty$. In general, if $X \times Y$ is a product space, a $D$-long geodesic $\gamma$ can have $D$-large projection onto $X \times \{id\}$, regardless of the distance of $\gamma$ from $X \times \{id\}$. Figure \ref{fig:tableGp} summarizes the discussion so far.

\begin{figure}\begin{center}
\begin{tabular}{|c ||c c c|} 
 \hline 
 & $\delta$-hyperbolic space & $f(D, M)$ for a fixed $D$ & Density of non-loxodromics \\
 \hline \hline 
 $F_{2}$ & $Cay(F_{2})$ & constant in $M$ & $\lesssim \lambda^{-n}$ for some $\lambda >1$\\
 $\Mod(\Sigma)$ & $\mathcal{C}(\Sigma)$ & linear in $M$ & $\lesssim n^{-k}$ ($\forall k$) \\
 $\Out(F_{N})$ & $\mathcal{FF}_{N}$ & finite& tends to 0 \\
 $F_{2} \times F_{3}$ & $Cay(F_{2})$ & $+\infty$ & can be bounded away from 0 \\
 \hline
\end{tabular}
\end{center}

\caption{Properties of the four actions and the density estimates}
\label{fig:tableGp}
\end{figure}

The more information we have about the growth of $f(D, M)$, the better asymptotics of the density of non-loxodromics we can prove. In $F_{2}$, the proportion of non-loxodromic elements in $B_{S}(n)$ decays exponentially fast in $n$. This is proved by W. Yang \cite{yang2020genericity} in groups with strongly contracting elements, including relatively hyperbolic groups and small cancellation groups.

For the mapping class group, the function $f(D, M)$ grows at most linearly in $M$. Using this property, it is shown in \cite{choi2024counting} that the density of non-pseudo-Anosovs in $B_{S}(n)$ decays faster than $n^{-k}$ for any $k>0$. Similar growth behaviour of $f(D, M)$ is observed in HHGs with Morse elements, because loxodromics on the top curve space have the weakly contracting property. Rank-1 CAT(0) groups also fall into this category, as the strongly contracting property of a rank-1 element on the CAT(0) space implies its weak contracting property in the group.

Without control of $f(D, M)$, loxodromics can either be generic or non-generic depending on the generating set $S$. Indeed, there exist two finite generating sets $S$ and $S'$ of $F_{2} \times F_{3}$, such that loxodromics (for the action on $Cay(F_{2})$) are generic in $S$ but not in $S'$. We refer readers to \cite[Example 1]{gekhtman2018counting}. This simple example also tells us that genericity of Morse elements of a group may not be preserved through a quasi-isometry.

This paper deals with $\Out(F_{N})$ and others of its ilk. There is no \emph{a priori} control on the growth of $f(D, M)$ for acylindrically hyperbolic groups. Our main point is that, nonetheless, the finiteness of $f(D, M)$ is sufficient to conclude the genericity of loxodromics.

\subsection{Another side of the story: random walks} \label{subsection:RW}

There are two popular models to sample a random element in a group $G$. One is the counting method as in Theorem \ref{thm:main}. Namely, we consider a large word metric ball and choose an element with respect to the uniform measure. The other one is the random walk model: we  put a probability measure $\mu$ on a generating set $S$ of $G$ (e.g., the uniform measure when $S$ is finite) and investigate its $n$-fold convolution $\mu^{\ast n}$.

For example, given a $G$-action on a Gromov hyperbolic space $X$, one can ask if  $\Prob_{\mu^{\ast n}} (\textrm{$g$ is loxodromic})$ converges to 1 as $n$ tends to infinity. This is closely related to a description of a typical sample path drawn on $X$, called \emph{ray approximation} or \emph{geodesic tracking}, that was pursued for word hyperbolic groups by V. Kaimanovich \cite{kaimanovich1994poisson}; see \cite{kaimanovich2000hyp} also. It was J. Maher's observation that neither the properness of $X$ nor the properness of the action is necessary. As a result, Maher proved in \cite{maher2011random} that $\Prob_{\mu^{\ast n}}(\textrm{$g$ is pseudo-Anosov})$ converges to 1 in the mapping class group (I. Rivin independently proved this result using different method in \cite{rivin2008walks}).

Maher's observation was later generalized by D. Calegari and J. Maher \cite{calegari2015scl}, and once again by J. Maher and G. Tiozzo in \cite{maher2018random}: they proved that $\Prob_{\mu^{\ast n}} (\textrm{$g$ is loxodromic})$ converges to 1 as long as the $G$-action on $X$ is non-elementary (i.e., $S$ generates two independent loxodromics). In particular, random walks do not care if the group has a large subgroup with trivial action, given that they hit non-elementary loxodromic elements for a positive probability. Maher-Tiozzo's result indeed applies to all 4 group actions in Subsection \ref{subsection:compare}.

Consequently, for the uniform measure $\mu_{S}$ on a finite generating set $S$ of $G$, the genericity of loxodromics with respect to $\mu_{S}^{\ast n}$ does not imply the genericity with respect to $(\textrm{uniform measure on $B_{S}(n)$})$. This is anticipated by the fact that the two measures differ by an exponential factor in $n$.

If one is allowed to pick their favorite generating set $S$ for $G$, then one can bring the estimates from random walks to the counting problem. This was indeed the strategy of \cite{choi2024pseudo-Anosovs}, where the author proved that every finitely generated weakly hyperbolic group has a finite generating set $S$ for which loxodromics are generic. Since the asymptotic density may depend on the choice of $S$ (as shown in \cite[Example 1]{gekhtman2018counting}), this strategy does not establish Theorem \ref{thm:main}.

\subsection{Beyond hyperbolic spaces} \label{subsection:hyperbolic}

The method for Theorem \ref{thm:mainWPD} does not require global hyperbolicity of the space $X$. It only uses the \emph{strongly contracting property} and the WPD property of $g \in G$ in $X$. For simplicity, however, we will not pursue this generality.  It should be noted that the previous assumption does not imply that $g$ is strongly contracting \emph{in $G$}, i.e., with respect to the word metric. For example, the author does not know whether fully irreducibles are weakly contracting with respect to the word metrics (cf. \cite[Question 6.8]{behrstock2014divergence}).

For example, the method for Theorem \ref{thm:mainWPD} applies to finitely generated groups acting on a CAT(0) space (not necessarily cocompactly) that involves a rank-1 isometry with the WPD property. The study of strongly contracting isometries and their dynamics is growing rapidly. We refer the readers to the references in \cite{arzhantseva2015growth},   \cite{yang2019statistically}, \cite{yang2020genericity}, \cite{coulon2022patterson}, \cite{sisto2023morse}, \cite{ding2025growth}.

In fact, the very notion of acylindrically hyperbolic group was already formulated in terms of contracting elements by A. Sisto \cite{sisto2018contracting}, who generalized Maher-Tiozzo's random walk theory  in \cite{maher2018random} to non-hyperbolic spaces. We also note a recent construction by H. Petyt and A. Zalloum \cite[Theorem B]{petyt2024constructing} that justifies why it suffices to consider WPD action on hyperbolic spaces.

\subsection{Open questions}\label{subsection:question}

The methods in \cite{choi2024counting} and this paper still do not answer:

\begin{question}\label{qn:exponential}
Are pseudo-Anosovs exponentially generic in every word metric on $\Mod(\Sigma)$?
\end{question}

There are two types of word metrics for which exponential genericity of pseudo-Anosovs is known. One comes from generating sets mostly consisting of independent pseudo-Anosovs \cite{choi2024pseudo-Anosovs}. The other recent one is due to L. Ding, D. Mart{\'i}nez-Granado and A. Zalloum \cite{ding2025growth}, where the authors consider the $\Mod(\Sigma)$-action on an injective metric space $(Y, d_{Y})$ and collect orbit points in a large $d_{Y}$-ball. It seems hard to push either method  to handle  arbitrary word metric.

For non-HHGs, we can ask: 
\begin{question}\label{qn:exponentialOut}
Are fully irreducibles exponentially generic in $\Out(F_{N})$ with respect to every word metric? Or, is it at least true for some $\alpha>0$ that \[
\frac{\#B_{S}(n) \cap \{\textrm{fully irreducibles}\}}{\#B_{S}(n)} \lesssim n^{-\alpha}? \] 
\end{question}
This question might be answered for a given group $G$ whenever we know the growth of the function $f(D, M)$ in Subsection \ref{subsection:compare}. 

One can ask more details about generic elements. In the random walk side we have strong law of large numbers (SLLN): for any non-elementary random walk $(Z_{n})_{n>0}$ on a Gromov hyperbolic space, there exists $\lambda \in (0, +\infty]$ such that $\lim_{n} \frac{\|Z_{n}\|_{X}}{n} = \lim_{n} \frac{\tau_{X}(Z_{n})}{n} = \lambda$ almost surely (\cite{calegari2015scl}, \cite{maher2018random}, \cite{baik2021linear}). Here the key point is the linear growth of displacement and translation length. We pose:

\begin{question}\label{qn:linear}
Do generic fully irreducibles have linearly growing translation length? Namely, given a finite generating set $S$ of $\Out(F_{N})$, does there exist a linear function $f : \mathbb{R} \rightarrow \mathbb{R}$ such that  \begin{equation}\label{eqn:linear}
\lim_{n \rightarrow +\infty} \frac{\#B_{S}(n) \cap \big\{g \in \Out(F_{N}): \tau_{\mathcal{FF}}(g) \ge f(n) \big\}}{\#B_{S}(n)} = 1? \end{equation}
\end{question}

Our method does provide a diverging function $f$ for which Equation \ref{eqn:linear} holds, but we have no control on the growth of $f$. For the mapping class group, the author anticipates that the method in \cite{choi2024counting}  guarantees $f(n) \gtrsim \sqrt{n}$. The results of \cite{choi2024pseudo-Anosovs} and \cite{ding2025growth} imply that $f(n) \gtrsim n$ works for \emph{certain} finite generating set $S$.

Finally, we state a question related to Question \ref{qn:exponential}.
\begin{question}\label{qn:exponentialGrowth}
Does $G=\Mod(\Sigma)$ or $G=\Out(F_{N})$ have purely exponential growth? That means, for (some or every) finite generating set $S$ of $G$, does there exist $K, \lambda > 1$ such that \[
\frac{1}{K} \lambda^{n} \le \#B_{S}(n) \le K \lambda^{n}? \quad (\forall n>0)
\]
\end{question}
This question is answered by W. Yang for groups with strongly contracting elements \cite[Theorem B]{yang2019statistically}. Meanwhile, we do not know the answer for $\Mod(\Sigma)$ for any finite generating set.

\subsection{Plans}\label{subsection:plans}

After reviewing preliminaries in Section \ref{section:prelim}, we observe a variant of A. Sisto's geometric separation lemma \cite[Lemma 3.3]{sisto2016quasi-convexity} in Section \ref{section:sublinear}. We then prove the main theorem in Section \ref{section:proof}.

\subsection*{Acknowledgments}
The author thanks Jason Behrstock for discussion about weakly contracting property in the mapping class group, Jason Manning for discussion about acylindrical actions, and Alessandro Sisto for discussion about Lemma \ref{lem:amplifyWPD}.

This work was first conceived during the program ``Randomness and Geometry" at the Fields Institute. The author thanks the Fields Institute and the organizers of the program for their hospitality. Some progress was made while the author was visiting June E Huh Center for Mathematical Challenges at KIAS in January 2025.

\section{Preliminaries}\label{section:prelim}

In this section, we collect some notions and facts about acylindrically hyperbolic groups. We refer to Gromov's seminal paper \cite{gromov1987hyperbolic} and standard textbooks \cite{coornaert1990geometrie}, \cite{ghys1990bord}.

A metric space is said to be \emph{geodesic} if every pair of points can be connected by a geodesic. For two points $x$ and $y$ in this space, we denote by $[x, y]$ an arbitrary geodesic connecting $x$ to $y$. Given $\delta>0$, we say that a geodesic metric space is $\delta$-hyperbolic if every geodesic is $\delta$-slim. 

Given a geodesic $\gamma: I \rightarrow X$, we will sometimes denote the image $Im(\gamma) \subseteq X$ by $\gamma$. Based on this convention, we define the \emph{closest point projection} $\pi_{\gamma} : X \rightarrow 2^{\gamma}$ by \[
y \in \pi_{\gamma}(x) \,\, \Leftrightarrow\,\, d_{X}(x, y) = \inf\big\{ d_{X}(x, p) : p \in \gamma\big\}.
\]
We say that two geodesics  $\gamma : [0, L] \rightarrow X$ and $\eta : [0, L'] \rightarrow X$ are \emph{$\epsilon$-fellow traveling} if \[
 d_{X} (\gamma(0), \eta(0)) < \epsilon,\,\, d_{X}(\gamma(L), \eta(L')) < \epsilon\,\,\textrm{and}\,\,  d_{Haus}(\gamma, \eta) < \epsilon.
\]
The fellow traveling property is transitive: if $\gamma_{1}$ and $\gamma_{2}$ are $\epsilon$-fellow traveling; $\gamma_{2}$ and $\gamma_{3}$ are $\epsilon'$-fellow traveling, then $\gamma_{1}$ and $\gamma_{3}$ are $(\epsilon+\epsilon')$-fellow traveling. Furthermore, we have:
\begin{fact}\label{fact:fellowEndpt}
Let $X$ be a $\delta$-hyperbolic space and let $x, y, z, w \in X$ be such that $d_{X}(x, y)<\epsilon$ and $d_{X}(z, w)< \epsilon'$. Then $[x, z]$ and $[y, z]$ are $(\epsilon + \delta)$-fellow traveling. Moreover, $[x, z]$ and $[y, w]$ are $(\epsilon+\epsilon' + 2\delta)$-fellow traveling.
\end{fact}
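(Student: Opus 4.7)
The plan is to prove the first assertion directly from the $\delta$-slim triangle property and to derive the moreover clause by invoking the transitivity of fellow traveling stated immediately before the fact.

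For the first assertion, the endpoint conditions $d_{X}(x, y) < \epsilon < \epsilon + \delta$ and $d_{X}(z, z) = 0 < \epsilon + \delta$ are automatic, so what remains is the Hausdorff bound $d_{Haus}([x, z], [y, z]) < \epsilon + \delta$. I consider the geodesic triangle on $\{x, y, z\}$ with sides $[x, y]$, $[y, z]$, $[x, z]$. For $p \in [x, z]$, $\delta$-slimness provides $q \in [x, y] \cup [y, z]$ with $d_{X}(p, q) < \delta$; if $q \in [y, z]$ we are done, and otherwise $q \in [x, y]$, in which case $d_{X}(q, y) < \epsilon$ since all of $[x, y]$ has length less than $\epsilon$, whence $d_{X}(p, y) < \epsilon + \delta$ with $y \in [y, z]$. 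The symmetric pass for $p \in [y, z]$ uses slimness against $[x, y] \cup [x, z]$: either $p$ is within $\delta$ of $[x, z]$ directly, or $p$ is within $\delta$ of some $q \in [x, y]$, giving $d_{X}(p, x) < \epsilon + \delta$ with $x \in [x, z]$.

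For the moreover clause, I would apply the first assertion in two configurations. First, with the given $x, y, z$, it yields that $[x, z]$ and $[y, z]$ are $(\epsilon + \delta)$-fellow traveling. Second, applied to the triple $(z, w, y)$ in place of $(x, y, z)$, using $d_{X}(z, w) < \epsilon'$, it shows that $[z, y]$ and $[w, y]$ -- that is, $[y, z]$ and $[y, w]$ -- are $(\epsilon' + \delta)$-fellow traveling. The transitivity of fellow traveling noted just above the fact then delivers that $[x, z]$ and $[y, w]$ are $(\epsilon + \epsilon' + 2\delta)$-fellow traveling, matching the claim.

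There is essentially no serious obstacle; this is a routine thin-triangle exercise. The only care needed is in verifying both directions of the Hausdorff bound separately (slimness for a point on $[x, z]$ invokes the union $[x, y] \cup [y, z]$, while slimness for a point on $[y, z]$ invokes $[x, y] \cup [x, z]$, so the argument must be done symmetrically), and in checking that the intermediate geodesic $[y, z]$ shares an endpoint with each of $[x, z]$ and $[y, w]$ so that the first assertion can be applied cleanly in each step of the transitivity argument.
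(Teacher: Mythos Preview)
Your proof is correct and is exactly the standard thin-triangle argument one expects; the paper itself states this fact without proof (treating it as well known), so there is nothing to compare against beyond noting that your approach is the intended one.
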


For each $x \in X$, $\pi_{\gamma}(x)$ may not be a singleton. Nevertheless, its diameter is bounded and $\pi_{\gamma}(\cdot)$ is coarsely Lipschitz. The following is a consequence of \cite[Proposition 10.2.1]{coornaert1990geometrie}, which follows from the tree approximation lemma \cite[Th{\'e}or{\`e}me 8.1]{coornaert1990geometrie}, \cite[Th{\'e}or{\'e}me 2.12]{ghys1990bord}.

\begin{fact}\label{fact:projectionCoarse}
Let $X$ be a $\delta$-hyperbolic space.
\begin{enumerate}
\item Let $x, y \in X$ and let $\gamma$ be a geodesic in $X$. Then $\pi_{\gamma}(x) \cup \pi_{\gamma}(y)$ has diameter at most $d_{X}(x, y) + 12\delta$.
\item Let $x, y \in X$, let $\gamma$ be a geodesic in $X$ and let $p \in \pi_{\gamma}(x)$ and $q \in \pi_{\gamma}(y)$. Suppose that $p$ appears earlier than $q$ on $\gamma$ and that $d_{X}(p, q) > 20\delta$. Then any geodesic $[x, y]$ between $x$ and $y$ contains a subsegment that is $20\delta$-fellow traveling with $[p, q]$.
\end{enumerate}
\end{fact}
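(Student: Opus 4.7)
The plan is to derive both parts of Fact \ref{fact:projectionCoarse} from the tree approximation lemma cited in the statement, together with standard slim-triangle bookkeeping.

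For (1), I would first bound $\diam \pi_\gamma(x)$. Fix $p, p' \in \pi_\gamma(x)$; every $w \in [p, p'] \subseteq \gamma$ satisfies $d_X(x, w) \ge d_X(x, p) = d_X(x, p')$. By $\delta$-slimness of $\triangle(x, p, p')$, each such $w$ lies within $\delta$ of a point $u \in [x, p] \cup [x, p']$, and expressing $d_X(x, u)$ as $d_X(x, p) - d_X(u, p)$ (or the $p'$-analogue) forces $u$ to lie within $\delta$ of an endpoint; therefore $d_X(p, p') \le 4\delta$, and the same argument gives $\diam \pi_\gamma(y) \le 4\delta$. It remains to bound $d_X(p, q)$ for one choice of $p \in \pi_\gamma(x)$ and $q \in \pi_\gamma(y)$. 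For this I would apply the four-point tree approximation to $\{x, y, p, q\}$, arranged so that $\gamma$ contains the arc from $p$ to $q$: in the comparison tree $T$, the tree geodesic from $x$ to $y$ visits both projections whenever they differ, so $d_T(\pi_T(x), \pi_T(y)) \le d_T(x, y)$; lifting with the standard $O(\delta)$ additive error yields the asserted $12\delta$ constant.

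For (2), I would study the geodesic quadrilateral on $x, p, q, y$. Two applications of $\delta$-slimness (inserting a diagonal and decomposing into two triangles) show that every point of $[p, q]$ lies within $2\delta$ of $[x, p] \cup [x, y] \cup [q, y]$. Suppose $w \in [p, q]$ is $2\delta$-close to some $u \in [x, p]$. Then the projection property and the triangle inequality give
\[
d_X(x, p) \le d_X(x, w) \le d_X(x, u) + 2\delta = d_X(x, p) - d_X(u, p) + 2\delta,
\]
forcing $d_X(u, p) \le 2\delta$ and hence $d_X(w, p) \le 4\delta$; symmetrically, proximity to $[q, y]$ forces $d_X(w, q) \le 4\delta$. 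Thus any $w \in [p, q]$ at distance greater than $4\delta$ from both $p$ and $q$ must be $2\delta$-close to $[x, y]$. Since $d_X(p, q) > 20\delta$, choosing $w_1, w_2 \in [p, q]$ at distances $5\delta$ from $p$ and $q$ yields companion points $\xi, \eta \in [x, y]$ with $d_X(\xi, p), d_X(\eta, q) \le 7\delta$, and the subsegment of $[x, y]$ between $\xi$ and $\eta$ is the candidate fellow-traveler.

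The main obstacle I anticipate is the ordering step at the end of (2): verifying that the $2\delta$-companion on $[x, y]$ of a middle point of $[p, q]$ lies between $\xi$ and $\eta$ along $[x, y]$, rather than doubling back towards $x$ or $y$. The cleanest way to dispatch this is to apply the four-point tree approximation a second time (for instance to $\{x, y, w, \xi\}$ as $w$ ranges over $[p, q]$), so that the ordering is exact in the comparison tree and lifts back with additive error absorbed into the final $20\delta$ allowance.
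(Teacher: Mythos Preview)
Your proposal is correct and is essentially the argument the paper has in mind: the paper does not give its own proof of Fact~\ref{fact:projectionCoarse} but simply cites it as a consequence of \cite[Proposition 10.2.1]{coornaert1990geometrie} and the tree approximation lemma, which is exactly the toolkit you invoke. Your slim-triangle bookkeeping for both parts is the standard unpacking of that reference, and the constants you obtain fit comfortably inside the stated $12\delta$ and $20\delta$.

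One remark on your ``main obstacle'': it is not actually an obstacle. Once you have produced $\xi,\eta\in[x,y]$ with $d_X(\xi,p),\,d_X(\eta,q)\le 7\delta$, you do not need to track the $2\delta$-companions of intermediate points of $[p,q]$ at all. Simply apply $2\delta$-slimness to the quadrilateral on $\xi,p,q,\eta$: every point of $[p,q]$ lies within $2\delta$ of $[\xi,p]\cup[\xi,\eta]\cup[\eta,q]$, and since the two short sides have length $\le 7\delta$, every point of $[p,q]$ is within $9\delta$ of $[\xi,\eta]$; the symmetric argument bounds the other half of the Hausdorff distance. This gives $9\delta$-Hausdorff closeness and $7\delta$-endpoint closeness directly, with no monotonicity check needed, so your second invocation of tree approximation can be dropped.
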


\begin{cor}[{\cite[Lemma 4.1]{sisto2018contracting}}]\label{cor:monotone}
Let $X$ be a $\delta$-hyperbolic space, let $\gamma$ be a geodesic in $X$, let $x, y \in X$ and let $\eta$ be a subsegment of $\gamma$ that contains $\pi_{\gamma}(x) \cup \pi_{\gamma}(y)$. Then 
$\pi_{\gamma}([x, y])$ is contained in the  $60\delta$-neighborhood of $\eta$.
\end{cor}

\begin{proof}
Suppose to the contrary that there exist $z \in [x, y]$, $p \in \pi_{\gamma}(x)$, $q \in \pi_{\gamma}(y)$, $r \in \pi_{\gamma}(z)$ such that $d_{X}(p, r), d_{X}(q, r) \ge 60\delta$ and such that $p, q$ are to the right of $r$. Let $p_{0}$ be the point on $\gamma$ to the right of $r$ such that $d_{X}(r, p_{0}) = 60\delta$. Then Fact \ref{fact:projectionCoarse}(2) implies that there exist a subsegment $[r', p']$ of $[z, x]$ and a subsegment $[r'', p'']$ of $[z, y]$ such that $d_{X}(r', r), d_{X}(r'', r) < 20\delta$ and $d_{X}(p', p_{0}), d_{X}(p'', p_{0}) < 20\delta$. We then observe that \[
\begin{aligned}
40\delta &> d_{X}(p', p_{0}) + d_{X}(p_{0}, p'') \ge d_{X}(p', p'') \ge d_{X}(p', r') + d_{X}(r'', p'') \\
&\ge [d_{X}(p_{0}, r) - d_{X}(p_{0}, p') - d_{X}(r, r')] +  [d_{X}(p_{0}, r) - d_{X}(p_{0}, p'') - d_{X}(r, r'')] > 20\delta + 20\delta,
\end{aligned}
\]
a contradiction. Similar contradiction happens when $p, q$ are both to the left of $r$.
\end{proof}

For $x, y, z \in X$, we define the \emph{Gromov product} of $y$ and $z$ based at $x$ by \[
(y, z)_{x} := \frac{1}{2} \big[ d_{X}(y, x) + d_{X}(x, z) - d_{X}(y, z) \big].
\]
Gromov hyperbolicity has the following consequence.

\begin{fact}[{\cite[Lemma A.3]{choi2024counting}}]\label{fact:thinTri}
Let $X$ be a $\delta$-hyperbolic space. Let $x, y, z \in X$ and let $p \in [y, z]$ be such that $d_{X}(p, y) = (x, z)_{y}$. Then $\pi_{[y, z]}(x)$ is contained in the $8\delta$-neighborhood of $p$.
\end{fact}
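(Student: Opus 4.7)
The plan is to identify $p$ with any closest point projection of $x$ to $[y, z]$, up to the standard tree-approximation error in a $\delta$-hyperbolic geodesic triangle. So $p$ and $\pi_{[y,z]}(x)$ should differ only by a tripod defect.

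First, I would produce companion internal points for the triangle $\triangle xyz$: a point $p' \in [y, x]$ with $d_{X}(y, p') = (x, z)_{y} = d_{X}(y, p)$, and a point $p'' \in [x, z]$ with $d_{X}(z, p'') = (x, y)_{z} = d_{X}(z, p)$. The standard consequence of $\delta$-slimness of $\triangle xyz$ (the tripod/tree approximation) is that $d_{X}(p, p')$ and $d_{X}(p, p'')$ are each bounded by a small multiple of $\delta$.

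Next, for any $q \in \pi_{[y,z]}(x)$, I would split into cases according to whether $q$ lies between $y$ and $p$ or between $p$ and $z$ on $[y, z]$. In the former case, the triangle inequality through $y$ yields $d_{X}(x, q) \ge d_{X}(x, y) - d_{X}(y, q)$, while routing $d_{X}(x, p)$ via $p'$ gives
\[
d_{X}(x, p) \le d_{X}(x, p') + d_{X}(p', p) \le d_{X}(x, y) - d_{X}(y, p) + O(\delta).
\]
Since $d_{X}(p, q) = d_{X}(y, p) - d_{X}(y, q)$ in this case, subtracting these two inequalities produces $d_{X}(x, q) \ge d_{X}(x, p) + d_{X}(p, q) - O(\delta)$. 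The projection property $d_{X}(x, q) \le d_{X}(x, p)$ then forces $d_{X}(p, q) \le O(\delta)$. The case where $q$ is between $p$ and $z$ is handled symmetrically using $p''$ in place of $p'$.

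The main obstacle, and essentially the only subtle point, is pinning down the precise tree-approximation constant. Under the $\delta$-slim convention adopted in the paper, the thin-triangle property yields $d_{X}(p, p'), d_{X}(p, p'') \le 4\delta$, which feeds through to $d_{X}(p, q) \le 4\delta$ on each side of $p$; the $8\delta$ stated is a comfortable slack accommodating this constant. Otherwise, the argument is a direct combination of thin-triangle geometry with the defining inequality of closest-point projection.
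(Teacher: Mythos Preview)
The paper does not actually prove this fact; it is stated with a citation to \cite[Lemma A.3]{choi2024counting} and no argument is given here. Your proof is the standard thin-triangle argument and is correct: under the $\delta$-slim convention the internal points $p, p', p''$ of $\triangle xyz$ are pairwise within $4\delta$, and your case split combined with the minimality of $q \in \pi_{[y,z]}(x)$ gives $d_{X}(p,q) \le \max\{d_{X}(p,p'), d_{X}(p,p'')\} \le 4\delta$, well inside the stated $8\delta$ bound.
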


\begin{dfn}\label{dfn:WPD}
Let $G$ be a finitely generated group acting on a $\delta$-hyperbolic space $(X, d_{X})$ with a basepoint $x_{0} \in X$. We say that a loxodromic element $\varphi \in G$ has the \emph{WPD (weak proper discontinuity) property} if for each $K$ there exists $N, M$ such that \[
\# \Big(Stab_{K}(x_{0}, \varphi^{N} x_{0}) := \big\{ g \in G : d_{X} (x_{0}, gx_{0}) < K\,\, \textrm{and}\,\,d_{X}(\varphi^{N} x_{0}, g\varphi^{N} x_{0}) < K \big\}\Big) < M.
\]
\end{dfn}

We say that a finitely generated group $G$ is \emph{acylindrically hyperbolic} if it admits an isometric action on a $\delta$-hyperbolic space with a WPD loxodromic element $\varphi \in G$. An acylindrically hyperbolic group $G$ is said to be \emph{non-elementary} if it is not virtually cyclic. The following fact is a consequence of \cite[Proposition 6(1), (2)]{bestvina2002bounded}. The proof is sketched in \cite[Fact 2.2]{choi2024counting}.

\begin{fact}\label{fact:WPDNonElt}
Let $G$ be a non-virtually cyclic group with a generating set $S$. Suppose that $G$ acts on a $\delta$-hyperbolic space $X\ni x_{0}$ with a WPD loxodromic element $\varphi \in G$. Then there exists $E_{0} > 0$ such that the following hold.
\begin{enumerate}
\item For each $g \in G$, there exist $s, t \in S \cup \{id\}$ such that $(\varphi^{i} x_{0}, sgx_{0})_{x_{0}} \le E_{0}$ for all $i>0$ and $(\varphi^{j} x_{0}, tgx_{0})_{x_{0}} \le E_{0}$ for all $j<0$.
\item Let $n>0$ and $g \in G$. Let $\gamma := [x_{0}, \varphi^{n} x_{0}]$, let $p \in \pi_{\gamma} (gx_{0})$ and let $q \in \pi_{\gamma}(g\varphi^{n} x_{0})$. Suppose that $p$ appears earlier than $q$ along $\gamma$ and suppose that $d_{X}(p, q)> E_{0}$. Then $d_{S}(\varphi^{i}, g \varphi^{j}) < E_{0}$ for some $i , j\in \{0, 1,\ldots, n\}$.
\end{enumerate}
\end{fact}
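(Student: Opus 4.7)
The plan is to prove (1) and (2) as separate assertions and take $E_0$ at the end to be the maximum of the finite constants produced. Writing $\xi^\pm := \lim_{i \to \pm\infty} \varphi^i x_0 \in \partial X$, the Gromov product hypothesis in (1) is essentially $(\xi^+, sgx_0)_{x_0} \le E_0$ up to $O(\delta)$ (since Gromov products with $\varphi^i x_0$ for $i \to \infty$ converge to those with $\xi^+$), while (2) is a direct translation of WPD via the coarse projection lemma.

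For (1), I would first locate $s_+, s_- \in S$ with $s_+\xi^+ \neq \xi^+$ and $s_-\xi^- \neq \xi^-$. Since $G$ is non-virtually cyclic and $\varphi$ is WPD, Osin's theorem produces a second WPD loxodromic $\psi \in G$ whose fixed points are disjoint from $\{\xi^+, \xi^-\}$. Expressing $\psi$ as an $S$-word, at least one letter must fail to fix $\xi^+$ (else $\psi$ would), and we take that letter as $s_+$; $s_-$ is produced symmetrically. Now fix $g \in G$: if $(\xi^+, gx_0)_{x_0} \le E_0$, set $s = id$; otherwise, $G$-invariance of the Gromov product up to the shift $d_X(x_0, s_+x_0) \le C_S := \max_{s \in S}d_X(x_0, sx_0)$ gives $(s_+\xi^+, s_+gx_0)_{x_0} \ge (\xi^+, gx_0)_{x_0} - C_S - O(\delta)$. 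Once $E_0 > (\xi^+, s_+\xi^+)_{x_0} + C_S + O(\delta)$, the right side exceeds $(\xi^+, s_+\xi^+)_{x_0} + \delta$, so the ultrametric inequality forces $(\xi^+, s_+gx_0)_{x_0} \le (\xi^+, s_+\xi^+)_{x_0} + \delta \le E_0$, and $s = s_+$ works. The analogous argument produces $t$.

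For (2), the hypothesis $d_X(p, q) > E_0$ combined with Fact~\ref{fact:projectionCoarse}(2) applied to $[gx_0, g\varphi^n x_0] = g\gamma$ produces a subsegment of $g\gamma$ that $20\delta$-fellow-travels $[p, q] \subseteq \gamma$. Since $\{\varphi^k x_0\}_{0 \le k \le n}$ is a quasi-geodesic lying within a uniform $C$-neighborhood of $\gamma$ with orbit points spaced roughly by the translation length $\tau(\varphi)$, and the same holds for $\{g\varphi^k x_0\}$ along $g\gamma$, we can---provided $E_0$ is large relative to $\tau(\varphi), C, \delta$---select index pairs $(i, j)$ and $(i', j')$ with $i' - i = j' - j =: N_0$ such that both $d_X(\varphi^i x_0, g\varphi^j x_0)$ and $d_X(\varphi^{i'} x_0, g\varphi^{j'} x_0)$ are bounded by a uniform $K$. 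Setting $h := \varphi^{-i} g\varphi^j$, we obtain $d_X(x_0, hx_0) < K$ and $d_X(\varphi^{N_0} x_0, h\varphi^{N_0} x_0) < K$. Taking $N_0$ to be the WPD shift parameter for radius $K$, WPD constrains $h$ to lie in a finite exceptional set, whose elements have $S$-word lengths bounded by some $M_0$. Consequently $d_S(\varphi^i, g\varphi^j) = |h|_S \le M_0$.

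The main subtlety is selecting the single constant $E_0$ that makes everything work at once: it must (i) exceed the boundary Gromov products $(\xi^+, s_+\xi^+)_{x_0}$ and $(\xi^-, s_-\xi^-)_{x_0}$ plus the coarse shift $C_S$ from (1); (ii) be large enough that the fellow-travel in (2) accommodates two index pairs separated by the WPD shift $N_0 = N_0(K)$, i.e., $E_0 \gtrsim N_0 \tau(\varphi) + C + \delta$; and (iii) dominate the word-length bound $M_0$ from WPD. All three quantities are finite and depend only on $G, S, X, \varphi$, so their maximum serves as the desired $E_0$.
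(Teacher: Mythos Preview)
The paper does not actually prove this fact in-text: it cites Bestvina--Fujiwara \cite{bestvina2002bounded} (Proposition~6) and refers to a sketch in the author's earlier paper \cite{choi2024counting}. Your proposal is a correct, essentially self-contained argument that unpacks what those references contain. For part~(1), reducing to the existence of a single generator $s_{+}\in S$ with $s_{+}\xi^{+}\neq\xi^{+}$ and then invoking the ultrametric inequality is exactly the standard mechanism; your appeal to a second independent loxodromic is slightly more than needed (all you use is that $\xi^{+}$ is not a global fixed point of $G$, which already follows from non-virtual-cyclicity plus WPD), but it is certainly sufficient. For part~(2), the passage from fellow-travelling of $[p,q]\subset\gamma$ with a subsegment of $g\gamma$ to a pair of synchronized orbit points $\varphi^{i}x_{0}\approx g\varphi^{j}x_{0}$ and $\varphi^{i+N_{0}}x_{0}\approx g\varphi^{j+N_{0}}x_{0}$ is the expected use of WPD, and your dependency chain ($K$ fixed by the quasi-geodesic constants of $\varphi$, then $N_{0}=N_{0}(K)$, then $M_{0}$, then $E_{0}$ dominating all three) is non-circular. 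One point worth making explicit in a full write-up is why the \emph{same} shift $N_{0}$ works on both orbits: this is because both $(\varphi^{k}x_{0})$ and $(g\varphi^{k}x_{0})$ are $\varphi$-orbits, hence have identical coarse speed $\tau(\varphi)$ along their respective geodesics, and fellow-travelling geodesics have synchronized arclength parametrizations up to $O(\delta)$.
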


We now recall the notion of alignment.

\begin{dfn}\label{dfn:align}
Let $K>0$ and let $\gamma_{1}, \gamma_{2}, \ldots, \gamma_{n}$ be finite geodesics (which can be degenerate, i.e., points). We say that $(\gamma_{1}, \ldots, \gamma_{n})$ is $K$-aligned if for each $i=1, \ldots, n-1$ we have \[
\begin{aligned}
\diam\big(\pi_{\gamma_{i}}(\gamma_{i+1}) \cup (\textrm{ending point of $\gamma_{i}$})\big) < K \,\,\textrm{and}\\ \diam\big(\pi_{\gamma_{i+1}}(\gamma_{i}) \cup (\textrm{beginning point of $\gamma_{i+1}$})\big) < K.
\end{aligned}
\]
\end{dfn}

The following facts are straightforward, whose proofs can be found in \cite[Appendix]{choi2024counting}.

\begin{fact}\label{fact:fellowAlign}
Let $\gamma$ be a geodesic in a metric space. Let $\gamma_{1}$ and $\gamma_{2}$ be subsegments of $\gamma$, with $\gamma_{1}$ appearing earlier than $\gamma_{2}$. Let $\kappa_{1}$ and $\kappa_{2}$ be geodesics that are $K$-fellow traveling with $\gamma_{1}$ and $\gamma_{2}$, respectively. Then $(\kappa_{1}, \kappa_{2})$ is $6K$-aligned.
\end{fact}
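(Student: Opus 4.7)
By the symmetry of the setup, it suffices to verify the first alignment condition $\diam\bigl(\pi_{\kappa_{1}}(\kappa_{2}) \cup \{v_{1}\}\bigr) < 6K$, where $v_{1}$ denotes the terminal point of $\kappa_{1}$; the other condition is obtained by running the same argument with the roles of $(\kappa_{1}, \gamma_{1})$ and $(\kappa_{2}, \gamma_{2})$ interchanged (equivalently, by reversing the orientation of $\gamma$).

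Write $\gamma_{i} = \gamma|_{[a_{i}, b_{i}]}$ so that $b_{1} \le a_{2}$ by hypothesis. Fix $x \in \kappa_{2}$ and any $p \in \pi_{\kappa_{1}}(x)$. By the Hausdorff part of fellow-traveling, select $y \in \gamma_{2}$ with $d(x, y) < K$ and $z \in \gamma_{1}$ with $d(p, z) < K$, and write $y = \gamma(t)$, $z = \gamma(s)$, so automatically $s \le b_{1} \le a_{2} \le t$. The key leverage is that $\gamma$ is a geodesic, yielding the exact distances $d(z, \gamma(b_{1})) = b_{1} - s$ and $d(y, \gamma(b_{1})) = t - b_{1}$, together with the endpoint condition $d(v_{1}, \gamma(b_{1})) < K$ from fellow-traveling.

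Two triangle-inequality estimates do the work. An upper bound for $d(x, v_{1})$ via $y$ and $\gamma(b_{1})$ gives
\[ d(x, v_{1}) \le d(x, y) + d(y, \gamma(b_{1})) + d(\gamma(b_{1}), v_{1}) < 2K + (t - b_{1}), \]
and a lower bound for $d(x, p)$ via $z$ and $y$ gives
\[ d(x, p) \ge d(y, z) - d(x, y) - d(p, z) > (t - b_{1}) + (b_{1} - s) - 2K. \]
The minimality $d(x, p) \le d(x, v_{1})$ of closest-point projection collapses the $(t - b_{1})$ contributions and leaves $b_{1} - s < 4K$. Re-inserting this into a third triangle inequality yields
\[ d(p, v_{1}) \le d(p, z) + d(z, \gamma(b_{1})) + d(\gamma(b_{1}), v_{1}) < 2K + (b_{1} - s), \]
which bounds each projection point's distance to $v_{1}$ by a constant multiple of $K$, producing the claimed diameter estimate.

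The main obstacle is essentially bookkeeping: one has to set up the chain of triangle inequalities so that the a priori large and unknown quantities $(t - b_{1})$ and $(b_{1} - s)$ cancel against one another via projection minimality, rather than accumulating. Once that cancellation pattern is arranged, the rest is routine.
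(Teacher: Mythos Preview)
Your proof is correct. The paper itself does not include a proof of this fact, deferring it to the appendix of \cite{choi2024counting}, so there is no in-paper argument to compare against; your direct triangle-inequality approach, exploiting that $v_{1}\in\kappa_{1}$ competes with the projection point $p$, is exactly the kind of elementary argument one expects here. One small point worth making explicit in your final sentence: the bound $d(p,v_{1})<6K$ for every $p\in\pi_{\kappa_{1}}(\kappa_{2})$ immediately yields the diameter bound $\diam\bigl(\pi_{\kappa_{1}}(\kappa_{2})\cup\{v_{1}\}\bigr)<6K$ precisely because all such $p$ lie on the geodesic $\kappa_{1}$ with terminal point $v_{1}$, so the set $\pi_{\kappa_{1}}(\kappa_{2})\cup\{v_{1}\}$ is contained in the terminal subsegment of $\kappa_{1}$ of length less than $6K$.
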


\begin{fact}\label{fact:fellowOverlap}
The following holds for each $K>0$ and $L \ge 12K$. Let $\gamma$ be a geodesic in a metric space and let $\gamma_{1}$ and $\gamma_{2}$ be subsegments of $\gamma$ such that $\gamma_{1} \cap \gamma_{2}$ has length $L$. Let $[x, y]$ and $\kappa_{2}$ be geodesics that are $K$-fellow traveling with $\gamma_{1}$ and $\gamma_{2}$, respectively. Then $\pi_{\kappa}(x)$ appears earlier than $\pi_{\kappa}(y)$ along $\kappa$, and $d_{X} (\pi_{\kappa}(x), \pi_{\kappa}(y)) > L - 10K$.
\end{fact}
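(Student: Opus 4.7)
The plan is to locate the projections $p := \pi_{\kappa_{2}}(x)$ and $q := \pi_{\kappa_{2}}(y)$ near the two endpoints of the overlap $\gamma_{1} \cap \gamma_{2}$, and then to read off the ordering and distance from the arclength parametrization of $\kappa_{2}$ (where I am reading the $\kappa$ in the conclusion as $\kappa_{2}$). First I parametrize $\gamma$ by arclength so that $\gamma_{1} = \gamma|_{[s_{1}, t_{1}]}$ and $\gamma_{2} = \gamma|_{[s_{2}, t_{2}]}$ with $s_{1} \le s_{2} \le t_{1} \le t_{2}$ and $t_{1} - s_{2} = L$, and I write $\kappa_{2} = [u, v]$. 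The fellow-traveling hypotheses then give the four endpoint bounds $d_{X}(x, \gamma(s_{1})), d_{X}(y, \gamma(t_{1})), d_{X}(u, \gamma(s_{2})), d_{X}(v, \gamma(t_{2})) < K$, together with $d_{Haus}(\kappa_{2}, \gamma_{2}) < K$.

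Locating $q$ is straightforward: since $\gamma(t_{1}) \in \gamma_{2}$, the Hausdorff bound produces $q_{0} \in \kappa_{2}$ with $d_{X}(q_{0}, \gamma(t_{1})) < K$, and the fellow-traveling of $[x,y]$ with $\gamma_{1}$ yields $d_{X}(y, q_{0}) < 2K$. Hence $d_{X}(y, q) < 2K$ and $d_{X}(q, \gamma(t_{1})) < 3K$. Locating $p$ is the key step. The endpoint $u \in \kappa_{2}$ lies within $(s_{2} - s_{1}) + 2K$ of $x$ by the triangle inequality along $x \to \gamma(s_{1}) \to \gamma(s_{2}) \to u$, so the projection bound gives $d_{X}(x, p) < (s_{2} - s_{1}) + 2K$. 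On the other hand, if $p$ lies within $K$ of some $\gamma(\tau_{p}) \in \gamma_{2}$, then $d_{X}(x, p) > (\tau_{p} - s_{1}) - 2K$, and comparing these two estimates forces $\tau_{p} \le s_{2} + 4K$, from which $d_{X}(p, \gamma(s_{2})) < 5K$.

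The arclength parametrization of $\kappa_{2}$ from $u$ then lets me set $r_{p} := d_{X}(u, p)$ and $r_{q} := d_{X}(u, q)$. Triangle inequalities applied to the two placements above give $r_{p} < 6K$ and $r_{q} > L - 4K$. The hypothesis $L \ge 12K$ guarantees $r_{p} < r_{q}$, which is the desired ordering of $\pi_{\kappa}(x)$ before $\pi_{\kappa}(y)$ along $\kappa = \kappa_{2}$, and $d_{X}(p, q) = r_{q} - r_{p} > L - 10K$. The only nontrivial step is the placement of $p$: because $x$ lies on the $\gamma_{1}$-side of the overlap rather than near $\gamma_{2}$ itself, the projection $\pi_{\kappa_{2}}(x)$ could a priori wander arbitrarily far along $\kappa_{2}$, and ruling this out via the comparison with the candidate $u$ is where the argument does real work.
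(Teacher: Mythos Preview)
The paper does not prove this fact in the present text; it defers to the appendix of \cite{choi2024counting}, so there is no in-paper argument to compare against. Your elementary approach---pin down the projections $p,q$ near the endpoints $\gamma(s_{2}),\gamma(t_{1})$ of the overlap and read off the ordering and distance from the arclength of $\kappa_{2}$---is the natural one, and the estimates you give are correct in the configuration you set up.

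One completeness issue: you parametrize with $s_{1}\le s_{2}\le t_{1}\le t_{2}$, but nothing in the hypotheses forces this particular relative position of $\gamma_{1}$ and $\gamma_{2}$; the roles of $[x,y]$ and $\kappa_{2}$ are not symmetric, so you cannot simply swap labels. Your ``hard'' placement for $p$ (comparing $d_{X}(x,u)$ to the lower bound through $\gamma(\tau_{p})$) uses $s_{1}\le s_{2}$, and your ``easy'' placement for $q$ uses $\gamma(t_{1})\in\gamma_{2}$, i.e.\ $t_{1}\le t_{2}$. In the mirror configuration $s_{2}\le s_{1}\le t_{2}\le t_{1}$ these roles flip: now $\gamma(s_{1})\in\gamma_{2}$ gives the easy bound for $p$, while $q$ needs the comparison argument against the candidate $v$. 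The containment cases $\gamma_{1}\subseteq\gamma_{2}$ and $\gamma_{2}\subseteq\gamma_{1}$ are handled by using the easy argument on both ends, or the hard argument on both ends, respectively. All four cases produce the same $6K$ and $L-4K$ bounds on $r_{p},r_{q}$ and hence the same conclusion; you should acknowledge this case split rather than silently fix one ordering.
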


We now record a version of Behrstock's inequality \cite[Theorem 4.3]{behrstock2006asymptotic} (cf. \cite[Lemma 2.5]{sisto2018contracting}) and its consequences. The proofs can be found in \cite[Section 3, Appendix]{choi2024counting}.

\begin{fact}\label{fact:Behr}
Let $X$ be a $\delta$-hyperbolic space. Let $x \in X$ and let $(\gamma_{1}, \gamma_{2})$ be a $K$-aligned sequence of geodesics in $X$. Then either $(x, \gamma_{2})$ is $(K+60\delta)$-aligned or $(\gamma_{1}, x)$ is $(K+60\delta)$-aligned.
\end{fact}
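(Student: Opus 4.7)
The plan is to argue by contradiction. Write $a_i, b_i$ for the beginning and ending points of $\gamma_i$, and pick representatives $p_1 \in \pi_{\gamma_1}(x)$, $p_2 \in \pi_{\gamma_2}(x)$. Using $\diam \pi_{\gamma_i}(x) \le 12\delta$ from Fact \ref{fact:projectionCoarse}(1), the failure of both alignments at scale $K + 60\delta$ means there exist such representatives with $d_X(p_1, b_1) \ge K + 60\delta$ and $d_X(p_2, a_2) \ge K + 60\delta$. Combined with the $K$-alignment estimates $d_X(\pi_{\gamma_1}(a_2), b_1) < K$ and $d_X(\pi_{\gamma_2}(b_1), a_2) < K$, the two projection points on $\gamma_1$ (resp.\ $\gamma_2$) are strictly ordered and separated by a gap of at least $60\delta$.

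The first step is to apply Fact \ref{fact:projectionCoarse}(2) twice. Applied to $(x, a_2)$ projected onto $\gamma_1$, it yields a subsegment of $[x, a_2]$ that $20\delta$-fellow travels with $[p_1, \pi_{\gamma_1}(a_2)]$, giving in particular a point $u \in [x, a_2]$ with $d_X(u, b_1) \le K + 20\delta$ together with a point $u^\ast \in [x, a_2]$ (closer to $x$) with $d_X(u^\ast, p_1) \le 20\delta$. Symmetrically, for $(b_1, x)$ projected onto $\gamma_2$ I obtain $v \in [x, b_1]$ with $d_X(v, a_2) \le K + 20\delta$ and $v^\ast \in [x, b_1]$ with $d_X(v^\ast, p_2) \le 20\delta$. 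Reading off the near-pass of $u$ gives $(x, a_2)_{b_1} \le K + 20\delta$ and similarly $(x, b_1)_{a_2} \le K + 20\delta$ via $v$; the elementary identity $(x, a_2)_{b_1} + (x, b_1)_{a_2} = d_X(a_2, b_1)$ then forces $d_X(a_2, b_1) \le 2K + 40\delta$.

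The contradiction is then extracted by analysing the five-point configuration $\{x, a_1, b_1, a_2, b_2\}$ through the $\delta$-thin triangle $(x, a_2, b_1)$. Morally, in the tree model the spine $\gamma_1 \cup [b_1, a_2] \cup \gamma_2$ admits a branch point to which $x$ is attached, and the case analysis is transparent: if $x$ branches off on $\gamma_1$ then $\pi_{\gamma_2}(x) = a_2$ so $(x, \gamma_2)$ is aligned; symmetrically if $x$ branches off on $\gamma_2$; and on the bridge $[b_1, a_2]$ both alignments hold. The assumption that neither alignment holds up to $K + 60\delta$ is therefore incompatible with the tree model. In the hyperbolic space, one witnesses this incompatibility by combining $\delta$-slimness of $(x, a_2, b_1)$ with monotonicity of projection onto $\gamma_2$ (Corollary \ref{cor:monotone}) applied to $[x, a_2]$ and to $[x, b_1]$, and tracking the positions of $u, v, u^\ast, v^\ast$ against the derived bound $d_X(a_2, b_1) \le 2K + 40\delta$ and the alignment estimates.

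The main obstacle is bookkeeping the $\delta$-constants: each step (Facts \ref{fact:projectionCoarse} and \ref{fact:fellowEndpt}, Corollary \ref{cor:monotone}, and the $\delta$-slim triangle inequality) contributes additive $\delta$-error terms, and the $60\delta$ slack in the statement is precisely the total budget that absorbs them. A robust fallback is to invoke the $5$-point tree approximation lemma for $\{x, a_1, b_1, a_2, b_2\}$: the conclusion holds exactly in the resulting $\mathbb{R}$-tree by the case analysis above, and the tree approximation error is absorbed into the $60\delta$ budget.
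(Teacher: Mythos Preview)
The paper does not prove Fact~\ref{fact:Behr} inline; it defers to \cite[Section~3, Appendix]{choi2024counting}. So there is no in-paper argument to compare against, and I will assess your proposal on its own terms.

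Your setup and the first half of the argument are correct and cleanly executed. The contradiction hypothesis is stated properly, the application of Fact~\ref{fact:projectionCoarse}(2) to produce $u$ and $v$ is valid, and the derivation of $d_X(a_2,b_1)\le 2K+40\delta$ via the Gromov-product identity is both correct and elegant.

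The gap is in the final paragraph: you sketch two routes to the contradiction but do not carry either one out. The phrase ``tracking the positions of $u,v,u^\ast,v^\ast$'' is not a proof, and the tree-approximation fallback asserts without verification that the error is absorbed by the $60\delta$ budget. This matters because the constant here is not generous: if one tries the most natural direct argument---applying Fact~\ref{fact:projectionCoarse}(2) to $[x,p_2]$ against $\gamma_1$ and to $[x,p_1]$ against $\gamma_2$, then adding the resulting lower bounds for $d_X(x,p_1)$ and $d_X(x,p_2)$---the accumulated $20\delta$ errors total roughly $160\delta$, whereas the hypothesis only supplies $2\cdot 60\delta=120\delta$ of separation. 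So the na\"ive bookkeeping falls $40\delta$ short, and closing that gap requires either a sharper use of the fellow-traveling (e.g.\ exploiting that $p_1=\pi_{\gamma_1}(x)$ forces $d_X(x,z)\ge d_X(x,p_1)$ exactly, not just up to $20\delta$) or a genuinely careful tree-approximation computation with explicit constants. Your proposal does not do this, so as written it is a correct outline rather than a complete proof at the stated constant.
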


\begin{fact}\label{fact:Behrstock}
Let $X$ be a $\delta$-hyperbolic space. Let $n \ge 3$ and let $(\gamma_{1}, \ldots, \gamma_{n})$ be a $K$-aligned sequence of geodesics in $X$. Suppose that $\gamma_{2}, \ldots, \gamma_{n-1}$ are longer than $2K + 120\delta$. Then $(\gamma_{i}, \gamma_{j})$ is $(K+60\delta)$-aligned for each $1 \le i < j \le n$.
\end{fact}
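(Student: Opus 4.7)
The plan is to induct on the gap $j - i$, with Fact \ref{fact:Behr} providing the main dichotomy at each step and the length hypothesis on $\gamma_{2}, \ldots, \gamma_{n-1}$ used to eliminate the wrong branch.

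The base case $j - i = 1$ is immediate, since $K$-alignment of consecutive pairs already implies $(K + 60\delta)$-alignment. For the inductive step, fix $i < j$ with $j - i \ge 2$ and assume the conclusion for every pair of indices with strictly smaller gap, in particular for $(i+1, j)$. I first verify the bound $\diam\bigl(\pi_{\gamma_{i}}(\gamma_{j}) \cup \{\textrm{ending point of } \gamma_{i}\}\bigr) < K + 60\delta$. Pick an arbitrary $x \in \gamma_{j}$ and apply Fact \ref{fact:Behr} to the $K$-aligned pair $(\gamma_{i}, \gamma_{i+1})$ with auxiliary point $x$. The dichotomy produces either (a) $(x, \gamma_{i+1})$ is $(K + 60\delta)$-aligned, i.e.\ $\pi_{\gamma_{i+1}}(x)$ sits within $K + 60\delta$ of the beginning point of $\gamma_{i+1}$, or (b) $(\gamma_{i}, x)$ is $(K + 60\delta)$-aligned, which is exactly the per-$x$ bound I want.

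To rule out (a), note that the inductive hypothesis applied to $(\gamma_{i+1}, \gamma_{j})$ also places $\pi_{\gamma_{i+1}}(\gamma_{j}) \supseteq \pi_{\gamma_{i+1}}(x)$ within $K + 60\delta$ of the ending point of $\gamma_{i+1}$. Picking any $p \in \pi_{\gamma_{i+1}}(x)$ (which is nonempty since $\gamma_{i+1}$ is compact), the triangle inequality forces the length of $\gamma_{i+1}$ to be strictly less than $2K + 120\delta$, contradicting the standing length hypothesis. Thus (b) holds for every $x \in \gamma_{j}$, so every point of $\pi_{\gamma_{i}}(\gamma_{j})$ sits within $K + 60\delta$ of the ending point of $\gamma_{i}$. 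Because that ending point is an endpoint of the geodesic $\gamma_{i}$, the set of such points in $\gamma_{i}$ is confined to a one-sided sub-arc of length strictly less than $K + 60\delta$ rooted at the ending point, yielding $\diam\bigl(\pi_{\gamma_{i}}(\gamma_{j}) \cup \{\textrm{ending point of } \gamma_{i}\}\bigr) < K + 60\delta$.

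The second alignment condition, regarding $\pi_{\gamma_{j}}(\gamma_{i})$ and the beginning point of $\gamma_{j}$, follows by the mirror-image argument: apply Fact \ref{fact:Behr} to the pair $(\gamma_{j-1}, \gamma_{j})$ with a point $y \in \gamma_{i}$, and exclude the wrong branch using the inductive hypothesis on $(\gamma_{i}, \gamma_{j-1})$ together with the length of $\gamma_{j-1}$. The main subtlety I anticipate is the upgrade from pointwise bounds on the projection fibers $\pi_{\gamma_{i}}(x)$ to a single diameter bound on their union with the endpoint; this works cleanly only because "within $K + 60\delta$ of an endpoint" cuts out a short one-sided sub-arc of a geodesic, so no stray factor of two appears.
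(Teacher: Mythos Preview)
The paper does not actually supply a proof of this fact; it defers to \cite[Section~3, Appendix]{choi2024counting}. Your induction on the gap $j-i$, invoking Fact~\ref{fact:Behr} at each step and using the length hypothesis on the intermediate geodesic $\gamma_{i+1}$ (respectively $\gamma_{j-1}$) to eliminate the wrong branch of the dichotomy, is the natural argument and is correct in substance. The index check that $\gamma_{i+1}$ and $\gamma_{j-1}$ fall in the range $\{2,\ldots,n-1\}$ where the length hypothesis applies goes through as you implicitly use it.

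One very minor technicality in your last paragraph: from the per-point strict bound $\diam\bigl(\pi_{\gamma_i}(x)\cup\{\text{end}\}\bigr) < K+60\delta$ for each $x\in\gamma_j$, you can only directly conclude $\diam\bigl(\pi_{\gamma_i}(\gamma_j)\cup\{\text{end}\}\bigr) \le K+60\delta$, since the supremum over $x$ need not be attained. The sub-arc is of length at most $K+60\delta$, not strictly less. This strict-versus-nonstrict distinction is immaterial in the coarse setting and is routinely elided, so it does not affect the validity of the argument or any downstream use in the paper.
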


\begin{fact}\label{fact:GromProdFellow}
Let $X$ be a $\delta$-hyperbolic space. Let $x, y \in X$ and let $\gamma_{1}, \ldots, \gamma_{n}$ be geodesics in $X$, longer than $2K+140\delta$ each, such that $(x, \gamma_{1}, \ldots, \gamma_{n}, y)$ is $K$-aligned.

Then there exist disjoint subsegments $\eta_{1}, \ldots, \eta_{n}$ of $[x, y]$ such that \begin{enumerate}
\item $\eta_{1}, \ldots, \eta_{n}$ are in order from left to right along $[x, y]$, i.e., $\eta_{i}$ appears earlier than $\eta_{i+1}$ along $[x, y]$ for each $i=1, \ldots, n-1$, and
\item $\gamma_{i}$ and $\eta_{i}$ are $(K+80\delta)$-fellow traveling for each $i=1, \ldots, n$.
\end{enumerate}
\end{fact}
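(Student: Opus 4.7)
My plan is to build each $\eta_i$ from the closest-point projection of $x$ and $y$ onto $\gamma_i$, and then verify ordering through a Gromov-product calculation based on the pairwise alignment.

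First, I would upgrade the chain alignment. Since each $\gamma_i$ has length greater than $2K + 140\delta > 2K + 120\delta$, Fact \ref{fact:Behrstock} applied to the $K$-aligned sequence $(x, \gamma_1, \ldots, \gamma_n, y)$ makes every sub-pair $(K+60\delta)$-aligned. Writing $\gamma_i = [a_i, b_i]$, the pairs $(x, \gamma_i)$ and $(\gamma_i, y)$ being $(K+60\delta)$-aligned give $d_X(\pi_{\gamma_i}(x), a_i) < K+60\delta$ and $d_X(\pi_{\gamma_i}(y), b_i) < K+60\delta$. Because $\gamma_i$ has length greater than $2K+140\delta$, the projections $\pi_{\gamma_i}(x)$ and $\pi_{\gamma_i}(y)$ lie more than $20\delta$ apart on $\gamma_i$ in the correct order. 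Fact \ref{fact:projectionCoarse}(2) then produces an oriented subsegment $\eta_i \subseteq [x,y]$ that is $20\delta$-fellow traveling with $[\pi_{\gamma_i}(x), \pi_{\gamma_i}(y)]$. Combining the two fellow-traveling bounds through the triangle inequality --- noting that the endpoints of $[\pi_{\gamma_i}(x), \pi_{\gamma_i}(y)]$ sit within $K+60\delta$ of those of $\gamma_i$, and that any point of $\gamma_i$ outside this subsegment is within $K+60\delta$ of one of these endpoints --- promotes $\eta_i$ and $\gamma_i$ to $(K+80\delta)$-fellow traveling.

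For disjointness and ordering, I would use the orientation preserved by Fact \ref{fact:projectionCoarse}(2): the $x$-side endpoint $e_i^-$ of $\eta_i$ lies within $K+80\delta$ of $a_i$, and the $y$-side endpoint $e_i^+$ lies within $K+80\delta$ of $b_i$. The pairwise alignment of $(\gamma_i, \gamma_{i+1})$ places $\pi_{\gamma_i}(a_{i+1})$ and $\pi_{\gamma_{i+1}}(b_i)$ within $K+60\delta$ of $b_i$ and $a_{i+1}$ respectively. Via thin-triangle additivity this forces the near-identities $d_X(x, a_{i+1}) = d_X(x, b_i) + d_X(b_i, a_{i+1}) + O(\delta)$ and $d_X(b_i, y) = d_X(b_i, a_{i+1}) + d_X(a_{i+1}, y) + O(\delta)$. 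Substituting into the Gromov-product formula yields $(a_{i+1}, y)_x - (b_i, y)_x = d_X(b_i, a_{i+1}) + O(\delta)$, and Fact \ref{fact:thinTri} locates $\pi_{[x,y]}(b_i)$ and $\pi_{[x,y]}(a_{i+1})$ at their respective Gromov-product positions on $[x,y]$, placing $e_i^+$ strictly before $e_{i+1}^-$.

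The main obstacle will be the quantitative tightness of this ordering step: in the regime where $d_X(b_i, a_{i+1})$ is small (consecutive geodesics nearly abutting), the Gromov-product gap is of the same order as the $O(\delta)$ error, and strict disjointness sits at the margin. The $20\delta$ slack between the hypothesized length bound $> 2K + 140\delta$ and the Fact \ref{fact:Behrstock} threshold $2K + 120\delta$ should supply the needed room, but the constant bookkeeping will require care; if any residual overlap of size $O(\delta)$ appears, a mild internal trimming of each $\eta_i$ should be absorbable within this slack without exceeding the $(K+80\delta)$ fellow-traveling constant.
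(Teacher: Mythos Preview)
Your construction of each $\eta_i$ via Fact~\ref{fact:projectionCoarse}(2) and the promotion to $(K+80\delta)$-fellow-traveling with $\gamma_i$ are both correct. The gap is in the ordering step, and it is not merely a matter of tight constants.

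Your near-additivity claim $d_X(x,a_{i+1}) = d_X(x,b_i) + d_X(b_i,a_{i+1}) + O(\delta)$ carries error of order $K$, not $\delta$: the point $b_i$ lies only within roughly $K+20\delta$ of $[x,a_{i+1}]$ (this is what Fact~\ref{fact:projectionCoarse}(2) applied to $\gamma_i$ actually gives, since $\pi_{\gamma_i}(a_{i+1})$ is only $K$-close to $b_i$), so the Gromov product $(x,a_{i+1})_{b_i}$ is bounded by $O(K)$. Likewise, locating $e_i^+$ near $\pi_{[x,y]}(b_i)$ via Fact~\ref{fact:thinTri} incurs error $2(K+80\delta)$, since you only know $d_X(e_i^+,b_i) < K+80\delta$. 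When $d_X(b_i,a_{i+1})$ is small, the computed gap $(a_{i+1},y)_x-(b_i,y)_x$ is swamped by these accumulated $O(K)$ errors, and trimming by $O(K)$ would push the fellow-traveling constant well past $K+80\delta$. The $20\delta$ slack in the length hypothesis is not enough.

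The clean fix is to build the $\eta_i$ \emph{inductively} rather than all at once. Construct $\eta_1 \subseteq [x,y]$ as you do. Its right endpoint $e_1^+$ lies within $20\delta$ of $q_1 \in \gamma_1$, so $\pi_{\gamma_2}(e_1^+)$ lies within $K+32\delta$ of $a_2$ by Fact~\ref{fact:projectionCoarse}(1) and the original $K$-alignment of $(\gamma_1,\gamma_2)$. Now apply Fact~\ref{fact:projectionCoarse}(2) to the pair $(e_1^+,y)$ projecting onto $\gamma_2$: the two projections are still more than $20\delta$ apart (the length hypothesis $>2K+140\delta$ leaves $48\delta$ of room), yielding $\eta_2 \subseteq [e_1^+,y]$. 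This $\eta_2$ is automatically disjoint from and to the right of $\eta_1$, and the same endpoint estimate keeps it $(K+80\delta)$-fellow-traveling with $\gamma_2$. The bound $K+32\delta$ is stable under iteration, so the induction continues. The paper itself defers the proof to the appendix of \cite{choi2024counting}, so there is no in-paper argument to compare against, but this inductive route closes your gap within the stated constants.
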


Let $G$ be a group and let $S \subseteq G$ be its finite generating set. The word metric $d_{S}$  is defined by \[
d_{S}(g, h) := \min \left\{ n \in \Z_{\ge 0} : \begin{array}{c} \textrm{$\exists \,a_{1}, a_{2}, \ldots, a_{n} \in S, \,\epsilon_{1},\epsilon_{2}, \ldots, \epsilon_{n} \in \{1, -1\}$} \\  \textrm{such that $g^{-1} h = a_{1}^{\epsilon_{1}} a_{2}^{\epsilon_{2}} \cdots a_{n}^{\epsilon_{n}}$}.\end{array}\right\}
\]
We use the notation for the word norm $\|g\|_{S} := d_{S}(id, g)$. We define \[
\begin{aligned}
B_{S}(n) &:= \big\{g \in \Mod(\Sigma) : d_{S}(id, g) \le n\big\}.
\end{aligned}
\]
We denote by $[g, h]_{S}$ an arbitrary $d_{S}$-geodesic between $g, h \in G$. By a $d_{S}$-path, we mean a sequence of group elements $P = (g_{1}, g_{2}, \ldots, g_{n})$ such that $d_{S}(g_{i} ,g_{i+1}) = 1$ for each $i$; we denote $n$ by $Len(P)$.

When the group $G$ acts on a metric space $X \ni x_{0}$, we often define \[
\begin{aligned}
\|g\|_{X} &:= d_{X}(x_{0}, gx_{0}) \quad (g \in G), \\
K_{Lip} &:= \max_{s \in S} \|s\|_{X}.
\end{aligned}
\]
Then we have $\|g\|_{X} \le K_{Lip} \|g\|_{S}$ for each $g \in G$.

\section{WPD property and contraction}\label{section:sublinear}

It is well-known that a loxodromic isometry $\varphi$ of a $\delta$-hyperbolic space $X \ni x_{0}$ has strictly positive asymptotic translation length $\tau:= \lim_{n} d_{X}(x_{0}, \varphi^{n} x_{0})/n$. Moreover, its orbit $\{\varphi^{i} x_{0}\}_{i\in\Z}$ is a quasigeodesic and hence quasi-convex. In summary,
\begin{fact}\label{fact:morseLox}
Let $\varphi$ be a loxodromic isometry of a $\delta$-hyperbolic space $X \ni x_{0}$. Then there exists $\mathcal{G}>0$ such that the sequence $(\varphi^{i} x_{0}, \ldots, \varphi^{j} x_{0})$ and the geodesic $[\varphi^{i} x_{0},  \varphi^{j} x_{0}]$ are $\mathcal{G}$-fellow traveling for each $i \le j$. Furthermore, the sequence $(\varphi^{i} x_{0})_{i \in \Z}$ is a $\mathcal{G}$-coarse geodesic, i.e., \[
d_{X}(\varphi^{i} x_{0}, \varphi^{l} x_{0}) \ge d_{X}(\varphi^{i} x_{0}, \varphi^{j} x_{0}) +  d_{X}(\varphi^{j} x_{0}, \varphi^{l} x_{0}) - \mathcal{G} \quad (\forall i \le j \le l).
\]
\end{fact}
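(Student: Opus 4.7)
The plan is to establish the two claims as essentially folkloric consequences of the standard Morse lemma (stability of quasi-geodesics) in Gromov hyperbolic spaces, applied to the orbit of $\varphi$.

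First I would verify that the orbit $(\varphi^{i} x_{0})_{i \in \Z}$ is a uniform quasi-geodesic, parametrized by $i$. Let $C := d_{X}(x_{0}, \varphi x_{0})$; by the triangle inequality and $\varphi$-equivariance, $d_{X}(\varphi^{i} x_{0}, \varphi^{j} x_{0}) \le C(j-i)$ for $i \le j$. For the lower bound, since $\varphi$ is loxodromic, $\tau := \lim_{n \to \infty} d_{X}(x_{0}, \varphi^{n} x_{0})/n > 0$ exists by Fekete's subadditivity lemma and equals $\inf_{n \ge 1} d_{X}(x_{0}, \varphi^{n} x_{0})/n$. However, the subadditive sequence only gives us a lower bound asymptotically, not for every $n$. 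To upgrade, I would argue that for every $n$ large enough (say $n \ge N_{0}$) we have $d_{X}(x_{0}, \varphi^{n} x_{0}) \ge \tau n/2$, and then absorb the finitely many small $n$ into an additive constant. Concretely, the orbit is a $(C, c_{0})$-coarse Lipschitz and $(\tau/2, c_{1})$-coarsely expanding map from $\Z$ to $X$ for suitable additive constants, hence a $(\lambda, c)$-quasi-geodesic with constants $\lambda, c$ depending only on $\varphi$ and $x_{0}$.

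Next I would apply the Morse lemma for Gromov hyperbolic spaces to this quasi-geodesic. It yields a constant $R = R(\lambda, c, \delta)$ such that for every $i \le j$, the geodesic $[\varphi^{i} x_{0}, \varphi^{j} x_{0}]$ and the sub-orbit $(\varphi^{i} x_{0}, \varphi^{i+1} x_{0}, \ldots, \varphi^{j} x_{0})$ (viewed as a discrete path, or connected by its consecutive geodesic segments, each of length $\le C$) lie within Hausdorff distance $R$ of each other, and their endpoints coincide. Taking $\mathcal{G}_{0} := R + C$ (the $+C$ covers points on the broken-geodesic interpolation that are off the orbit vertices) gives the first assertion of the fact.

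For the coarse-geodesic inequality, I would use the Morse property together with closest-point projection. Given $i \le j \le l$, let $\gamma = [\varphi^{i} x_{0}, \varphi^{l} x_{0}]$; by the fellow-traveling just established there is a point $p \in \gamma$ with $d_{X}(\varphi^{j} x_{0}, p) \le \mathcal{G}_{0}$. Since $\gamma$ is a geodesic,
\[
d_{X}(\varphi^{i} x_{0}, \varphi^{l} x_{0}) = d_{X}(\varphi^{i} x_{0}, p) + d_{X}(p, \varphi^{l} x_{0}) \ge d_{X}(\varphi^{i} x_{0}, \varphi^{j} x_{0}) + d_{X}(\varphi^{j} x_{0}, \varphi^{l} x_{0}) - 2\mathcal{G}_{0}
\]
by two applications of the triangle inequality. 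Setting $\mathcal{G} := \max(\mathcal{G}_{0}, 2\mathcal{G}_{0})$ proves both clauses. The only mildly delicate step is the promotion of the asymptotic lower bound $d_{X}(x_{0}, \varphi^{n} x_{0})/n \to \tau$ into a uniform linear lower bound with an additive defect, which is a routine consequence of the existence of $\tau$ and finiteness of small orbit displacements; everything else is a direct appeal to the standard Morse/stability lemma and $\varphi$-equivariance.
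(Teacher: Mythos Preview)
Your proposal is correct and matches the paper's own treatment: the paper states this as a well-known fact, simply noting that the orbit is a quasi-geodesic (hence quasi-convex by the Morse lemma), which is exactly your strategy. One small simplification: since $\tau = \inf_{n\ge 1} d_{X}(x_{0},\varphi^{n}x_{0})/n$ by Fekete, you already have $d_{X}(x_{0},\varphi^{n}x_{0}) \ge \tau n$ for \emph{every} $n\ge 1$, so no asymptotic upgrade or additive defect is needed for the lower bound.
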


In Subsection \ref{subsection:compare}, we claimed that $f(D, M) <+\infty$ for each $D, M > 0$ for every acylindrically hyperbolic group. We prove a variant of this fact.

\begin{lem}\label{lem:amplifyWPD}
Let $G$ be a non-virtually cyclic group with a finite generating set  $S \subseteq G$. Suppose that $G$  acts on a $\delta$-hyperbolic space $X \ni x_{0}$ with a WPD loxodromic element $\varphi \in G$. Then there exists $D_{0} > 0$, and for each $k, M>0$ there exists $R=R(k, M)>0$, such that the following holds.

Let $g, h \in G$ be such that $\|g\|_{S} > R$ and $\|h\|_{S} \le M$. Then $\pi_{[x_{0}, \varphi^{k} x_{0}]} (\{gx_{0}, ghx_{0}\})$ has diameter at most $D_{0}$. 
\end{lem}

This lemma closely resembles \cite[Lemma 3.3]{sisto2016quasi-convexity} and \cite[Lemma 8.1]{mathieu2020deviation}. Here, the crucial point is that $D_{0}$ is uniform and is independent from $k, M$ and $R$.

\begin{proof}
Let $\mathcal{G}>0$ be the constant for $\varphi$ as in Fact \ref{fact:morseLox}. For $K = 24\mathcal{G} + 130\delta$, we pick $N$ such that $Stab_{K}(x_{0}, \varphi^{N} x_{0})$ is finite using the WPD property of $\varphi$. We then set $D_{0} := 1002\mathcal{G} + N D_{\varphi} + 1000\delta$, where $D_{\varphi} := d_{X}(x_{0}, \varphi x_{0})$.

To prove the lemma, let $k, M>0$ and denote $\gamma := [x_{0}, \varphi^{k} x_{0}]$. Suppose to the contrary that there does not exist $R$ for $(k, M)$. That means, suppose that there exist a sequence $(g_{1}, g_{2}, \ldots )$ of distinct elements of $G$ and a sequence $(h_{1}, h_{2}, \ldots)$ in $B_{S}(M)$ such that \[
\diam\big( \pi_{\gamma} (\{g_{i}x_{0}, g_{i}h_{i}x_{0}\})\big) \ge D_{0} \quad (\forall i >0).
\]
Let $p_{i}, q_{i}$ be points in $\pi_{\gamma} (\{g_{i}x_{0}, g_{i}h_{i}x_{0}\})$ that are at least $D_{0}$-apart. Recall that the nearest point projection of a single point onto $\gamma$ has diameter at most $20\delta < D_{0}$ (Fact \ref{fact:projectionCoarse}(1)). Hence, up to relabelling, we can say that  $p_{i} \in \pi_{\gamma}(g_{i}x_{0})$ and $q_{i} \in \pi_{\gamma}(g_{i}h_{i}x_{0})$.

Since $\gamma = [x_{0}, \varphi^{k} x_{0}]$ is compact and $B_{S}(M)$ is finite, we can take a subsequence and assert that:  \[\begin{aligned}
h_{1} = h_{2} = \ldots &=: h, \\ d_{X}(p_{i}, p_{j}), d_{X}(q_{i}, q_{j}) &< \mathcal{G} \quad (\forall i, j >0).
 \end{aligned}
\]
Either $p_{i}$'s appear earlier than or later than $q_{i}$'s. In the latter case, we can replace $h$ with $h^{-1}$, $g_{i}$ with $g_{i} h$ and swap $p_{i}$'s with $q_{i}$'s. Hence, we may assume that $p_{i}$'s appear earlier than $q_{i}$'s.

By Fact \ref{fact:projectionCoarse}(2), there exists $[\alpha_{i}, \beta_{i}] \subseteq [g_{i} x_{0}, g_{i} hx_{0}]$ that is $20\delta$-fellow traveling with $[p_{i}, q_{i}]$. Note that $0 \le d_{X}(g_{i} x_{0}, \alpha_{i}) \le d_{X}(x_{0}, hx_{0})$. By taking further subsequence, we can obtain $T$ such that \[
\big|d_{X}(g_{i} x_{0}, \alpha_{i}) - T\big| < \delta \quad (\forall i > 0).
\]

By Fact \ref{fact:morseLox}, there exist $l, m \in \Z$ such that $d_{X}(\varphi^{l} x_{0}, p_{1}) < \mathcal{G}$ and $d_{X}(\varphi^{m} x_{0}, q_{1}) < \mathcal{G}$. Note that \begin{equation}\label{eqn:DVarphi}
D_{\varphi} \cdot |l-m| \ge d_{X}(\varphi^{l}x_{0}, \varphi^{m} x_{0}) > d_{X}(p_{1}, q_{1}) - 2\mathcal{G} > 1000\mathcal{G} + N D_{\varphi}.
\end{equation}
Here, if $m \le l$ then \[\begin{aligned}
d_{X}(x_{0}, q_{1}) &\le d_{X}(x_{0}, \varphi^{m} x_{0}) + \mathcal{G} \le d_{X}(x_{0}, \varphi^{l} x_{0}) - d_{X}(\varphi^{m} x_{0}, \varphi^{l} x_{0}) + 2\mathcal{G}\\
&\le d_{X}(x_{0}, \varphi^{l} x_{0}) - 1000\mathcal{G} + 2\mathcal{G} < d_{X}(x_{0}, \varphi^{l}x_{0}) - 998\mathcal{G} \le d_{X}(x_{0}, p_{1}) - 997\mathcal{G}.
\end{aligned}
\]
This contradicts the fact that $p_{1}$ appears earlier than $q_{1}$ on $\gamma$. Hence, we have $l < m$. 

Now Inequality \ref{eqn:DVarphi} implies that $l + N$ lies between $l$ and $m$. By Fact \ref{fact:morseLox}, $\varphi^{l+N} x_{0}$ lies in a $\mathcal{G}$-neighborhood of $[\varphi^{l} x_{0}, \varphi^{m}x_{0}]$. Note that  $d_{X}(\varphi^{l} x_{0}, p_{i}) \le d_{X}(\varphi^{l}x_{0}, p_{1}) + d_{X}(p_{1}, p_{i}) \le 2\mathcal{G}$ for each $i$, and similarly $\varphi^{m}x_{0}$ and $q_{i}$ are $2\mathcal{G}$-close.

By Fact \ref{fact:fellowEndpt} $[\varphi^{l} x_{0}, \varphi^{m}x_{0}]$ is $(4\mathcal{G}+2\delta)$-fellow traveling with $[p_{i}, q_{i}]$, which is $20\delta$-fellow traveling with $[\alpha_{i}, \beta_{i}]$. Thus, there exists $c_{i} \in [\alpha_{i}, \beta_{i}]$ such that $d_{X}(c_{i}, \varphi^{l+N} x_{0}) < 5\mathcal{G} + 22\delta$. We have \[\begin{aligned}
\big|d_{X}(\alpha_{i}, c_{i}) - d_{X}(\varphi^{l} x_{0}, \varphi^{l+N} x_{0}) \big| &\le d_{X}(\alpha_{i}, \varphi^{i} x_{0}) + d_{X}(c_{i}, \varphi^{l+N} x_{0})\\
&\le d_{X}(\alpha_{i}, p_{i}) + d_{X}(p_{i}, p_{1}) + d_{X}(p_{1}, \varphi^{l} x_{0}) + d_{X}(c_{i}, \varphi^{l+N} x_{0}) \\
&\le 20\delta + \mathcal{G} + \mathcal{G} + (5\mathcal{G}+22\delta) \le 7\mathcal{G}+42\delta.
\end{aligned}
\]

Now, for each $i$ we have four points $g_{i} g_{1}^{-1} \alpha_{1}$, $g_{i} g_{1}^{-1} c_{1}$, $\alpha_{i}$  on the geodesic \[
g_{i} \cdot g_{1}^{-1}([g_{1} x_{0}, g_{1} hx_{0}]) = [g_{i} x_{0}, g_{i} h x_{0}].
\]
 Recall that $d_{X}(g_{i} x_{0}, g_{i} g_{1}^{-1} \alpha_{1}) = d_{X}(g_{1} x_{0}, \alpha_{1})$ and $d_{X}(g_{i} x_{0}, \alpha_{i})$ are both $\delta$-close to $T$. This implies that $g_{i} g_{1}^{-1} \alpha_{1}$ and $\alpha_{i}$ are $2\delta$-close. Hence, we have \[
\begin{aligned}
d_{X}(\varphi^{l} x_{0}, g_{i} g_{1}^{-1} \cdot \varphi^{l} x_{0}) &\le 
d_{X}(\varphi^{l} x_{0},\alpha_{i}) + d_{X}(\alpha_{i}, g_{i} g_{1}^{-1} \alpha_{1}) + d_{X}(g_{i} g_{1}^{-1} \alpha_{1},  g_{i} g_{1}^{-1} \cdot \varphi^{l} x_{0}) \\
&\le d_{X}(\varphi^{l}x_{0}, p_{i}) + d_{X}(p_{i}, \alpha_{i}) + 2\delta + d_{X}(\varphi^{l} x_{0}, p_{1}) + d_{X} (p_{1}, \alpha_{1}) \\
&\le (20\delta + 2\mathcal{G}) + 2\delta + (\mathcal{G}+20\delta) = 42\delta + 3\mathcal{G}.
\end{aligned}
\]
Next, $d_{X}(g_{i} x_{0}, c_{i}) = d_{X}(g_{i} x_{0}, \alpha_{i}) + d_{X}(\alpha_{i} ,c_{i})$ is $(7\mathcal{G}+43\delta)$-close to $T + d_{X}(x_{0}, \varphi^{N} x_{0})$. So is $d_{X}(g_{i} x_{0}, g_{i} g_{1}^{-1} c_{1}) = d_{X}( g_{1}x_{0}, c_{1})$. Hence, $c_{i}$ and $g_{i} g_{1}^{-1 }c_{1}$ are $(14\mathcal{G} + 86\delta)$-close. We conclude \[\begin{aligned}
d_{X}(\varphi^{l+N} x_{0}, g_{i} g_{1}^{-1} \cdot \varphi^{l+N} x_{0})& \le 
d_{X}(\varphi^{l+N} x_{0},c_{i}) + d_{X}(c_{i}, g_{i} g_{1}^{-1} c_{1}) + d_{X}(g_{i} g_{1}^{-1} c_{1},  g_{i} g_{1}^{-1} \cdot \varphi^{l+N} x_{0}) \\
&\le (5\mathcal{G}+22\delta) + (14\mathcal{G}+86\delta) + (5\mathcal{G} + 22\delta) < 24\mathcal{G} + 130 \delta.
\end{aligned}
\]
To summarize, $\varphi^{-l} g_{i} g_{1}^{-1} \varphi^{l}$ belongs to $Stab_{K}(x_{0}, \varphi^{N} x_{0})$ for each $i$. Furthermore, we have \[
\varphi^{-l} g_{i} g_{1}^{-1} \varphi^{l} = \varphi^{-l} g_{j} g_{1}^{-1} \varphi^{l} \,\,\Leftrightarrow \,\, g_{i} = g_{j}.
\]
Since $g_{1}, g_{2}, \ldots$ are distinct, it follows that $Stab_{K}(x_{0}, \varphi^{N} x_{0})$ is infinite, a contradiction.
\end{proof}

\begin{prop}\label{prop:subContN}
Let $G$ be a non-virtually cyclic group and let $S \subseteq G$ be its finite generating set. Suppose that $G$ acts on a $\delta$-hyperbolic space $X \ni x_{0}$ with a WPD loxodromic element $\varphi \in G$. Then for each $K>0$ there exists $L_{0} = L_{0}(K)$ such that, for each $L\ge L_{0}$ and for each $M>0$ there exists $R_{0} =R_{0}(L, M)> 0$ satisfying the following.

Let $P_{l}$ be a $d_{S}$-path connecting $a_{l} \in G$ to $b_{l}\in G$ for $l=1, 2$. Let $g_{1}, \ldots, g_{m}\in G$ be such that \[
(a_{i}x_{0}, g_{1}[x_{0}, \varphi^{L} x_{0}], \ldots, g_{m}[x_{0}, \varphi^{L} x_{0}], b_{i}x_{0})\textrm{ is $K$-aligned.} \quad (i=1, 2)
\]
Let $k \le m$. Then one of the following happens: \begin{enumerate}
\item For each subset $I \subseteq \{1, \ldots, m\}$ of cardinality $k$, there exist $i \in I$ such that \[
d_{S}(P_{l}, g_{i}) < R_{0} \quad( l=1, 2)\textrm{; or,}
\]
\item $Len(P_{1}) + Len(P_{2}) \ge M \cdot k$.
\end{enumerate}

\begin{proof}
Let $D_{0}$ be as in Lemma \ref{lem:amplifyWPD}. Let $K_{Lip} = \max_{s \in S} \|s\|_{X}$, let $\tau := \lim_{n} \frac{1}{n} d_{X}(x_{0}, \varphi^{n} x_{0})$ and let  \[
L_{0} = \frac{1}{\tau} \big( 2K+2K_{Lip}  + 1000\delta +  D_{0}\big).
\]
Now given $L > L_{0}$, we will declare $R_{0} = R_{0} (L, M)$ following Lemma \ref{lem:amplifyWPD}.

Consider $P_{1}, P_{2}$ and $g_{i}$'s as in the proposition. Note that \begin{equation}\label{eqn:longLnth}
\diam_{X}([x_{0}, \varphi^{L}x_{0}]) \ge \tau L \ge 2K +2K_{Lip} + 1000\delta + D_{0}.
\end{equation}
Thanks to this inequality, we can apply Fact \ref{fact:Behrstock} to the $K$-aligned translates of $[x_{0}, \varphi^{L}x_{0}]$.

We will negate the case (1) and prove that (2) holds. For this, let $I \subseteq \{1, \ldots, m\}$ be a $k$-element subset such that $g_{i}$ is not simultaneously $R_{0}$-close to $P_{1}$ and $P_{2}$ for each $i \in I$. Let \[
I_{l} := \{ i : d_{S}(P_{l}, g_{i}) \ge R_{0} \}. \quad (l=1, 2)
\]
Then $I_{1} \cup I_{2}$ has cardinality at least $k$. For convenience, we will denote $g_{i}[x_{0}, \varphi^{L}x_{0}]$ by $\gamma_{i}$.

For each $i \in \{1, \ldots, m\}$, let $p_{i}$ be the latest vertex of $P_{1}$ such that $(p_{i} x_{0}, \gamma_{i})$ is $(K+60\delta)$-aligned. Such a vertex exists because $(a_{i} x_{0},  \gamma_{i})$ is $(K + 60\delta)$-aligned by Fact \ref{fact:Behrstock}. Now let\[
V_{i} := \{v \in P_{1} : v \,\,\textrm{comes later than $p_{i}$ along $P_{1}$ and $(\gamma_{i}, vx_{0})$ is $(K + 60\delta)$-aligned}\};
\]
$V_{i}$ is nonempty because $(\gamma_{i}, b_{i})$ is $(K+60\delta)$-aligned by Fact \ref{fact:Behrstock}. Let $q_{i}$ be the earliest vertex in $V_{i}$. Lastly, let $p_{i}'$ be the vertex of $P_{1}$ right after $p_{i}$, and let $q_{i}'$ be the one right before $q_{i}$.

Note that $\pi_{\gamma_{i}} (\{p_{i} x_{0}, p_{i}' x_{0}\})$ has diameter at most $K_{Lip} + 12\delta$ by Fact \ref{fact:projectionCoarse}(1). Hence, $\pi_{\gamma_{i}}(p_{i}'x_{0})$ is contained in the beginning $(K_{Lip} + K + 80\delta)$-subsegment of $\gamma_{i}$ and does not meet the ending $(K_{Lip} + 60\delta)$-subsegment (Inequality \ref{eqn:longLnth}). This implies $p_{i}' \notin V_{i}$, and $q_{i}$ comes later than $p_{i}'$. Let \[
V_{i}' := \{v \in P_{1} : v \,\, \textrm{in between $p_{i}$ and $q_{i}$ (excluding $p_{i}, q_{i}$)}\}.
\]
We have then observed that $p_{i}', q_{i}' \in V_{i}'$ and $V_{i}'$ is nonempty. For each $v \in V_{i}'$ we conclude that \[
\textrm{neither $(\gamma_{i}, vx_{0})$ nor $(vx_{0}, \gamma_{i})$ is $(K + 60\delta)$-aligned.}
\]
Now repeated application of Fact \ref{fact:Behr} tells us that for $i \neq j$, \[
v \in V_{i}' \Rightarrow \left\{ \begin{array}{cc} \textrm{$(\gamma_{j}, vx_{0})$ is $(K+60\delta)$-aligned} &\textrm{if}\,\, j < i \\ \textrm{$(vx_{0}, \gamma_{j})$ is $(K+60\delta)$-aligned} &\textrm{if}\,\, j > i \end{array}\right\} \Rightarrow v \notin V_{j}'.
\]
In conclusion, $V_{1}', \ldots, V_{m}'$'s are disjoint subpaths of $P_{1}$.

Now observe for each $i$ that \[\begin{aligned}
\diam_{X} (\pi_{\gamma_{i}} (\{p_{i}', q_{i}'\})) &\ge \diam(\gamma_{i}) - 2(K + 60\delta) - \diam(\pi_{\gamma_{i}}(p_{i}x_{0}, p_{i}'x_{0})) - \diam(\pi_{\gamma_{i}}(q_{i}x_{0}, q_{i}'x_{0})) \\&\ge \tau L - 2(K + 60\delta) - 2 \cdot (K_{Lip} + 20\delta) > D_{0}.
\end{aligned}
\]
If $i \in I_{1}$, we additionally know that $d_{S}(p_{i}', g_{i}) \ge R_{0}$. Lemma \ref{lem:amplifyWPD} then implies that $d_{S} (p_{i}', q_{i}') > M$. Hence, $Len(V_{i}') \ge M$ for each $i \in I_{1}$. Summing up, we obtain $Len(P_{1}) \ge M \cdot \#I_{1}$.

Similar logic implies $Len(P_{2}) \ge M \cdot \# I_{2}$. We thus conclude  \[
Len(P_{1}) + Len(P_{2}) \ge M \cdot (\#I_{1} \cup \#I_{2}) \ge M \cdot k.\qedhere
\]
\end{proof}

\end{prop}

\section{Proof of Theorem \ref{thm:mainWPD}}\label{section:proof}
Throughout, let $G$ be a non-virtually cyclic group with a finite generating set $S$. Suppose that $G$ acts on a $\delta$-hyperbolic space $X \ni x_{0}$ with a WPD loxodromic element $\varphi \in G$. When a constant $L$ is understood, we will use the notation \[
\Upsilon_{L} := \big[x_{0}, \varphi^{L} x_{0} \big].
\]

Since $G$ contains independent loxodromics, $G$ has exponential growth. In other words, \[
\lambda_{S} := \liminf_{n} \frac{\ln \# B_{S}(n)}{n} > 1.
\]
This immediately implies that:

\begin{fact}\label{fact:expGrowth}
For each sufficiently large $n$ we have \[
\# B_{S}(0.9n) \big/ \#B_{S}(n) \le \lambda_{S}^{0.05n}.
\]
\end{fact}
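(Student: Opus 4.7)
The plan is to invoke the standard Fekete subadditivity argument. The inclusion $B_S(m+n) \subseteq B_S(m) \cdot B_S(n)$ from the triangle inequality in the Cayley graph gives $\#B_S(m+n) \leq \#B_S(m) \cdot \#B_S(n)$, so the sequence $a_n := \ln \#B_S(n)$ is subadditive. Fekete's lemma then upgrades the $\liminf$ in the definition of $\lambda_S$ to a genuine limit and identifies it with $\inf_{n \geq 1} a_n/n$. In particular, this yields the clean lower bound $\#B_S(n) \geq e^{\lambda_S n}$ valid for every $n \geq 1$, which is the main reason subadditivity is the right tool here.

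For the matching upper side, the convergence $a_n/n \to \lambda_S$ provides, for any fixed $\epsilon > 0$ and all sufficiently large $n$, the bound $\#B_S(0.9n) \leq e^{0.9(\lambda_S + \epsilon)n}$. Dividing the two bounds gives
\[
\frac{\#B_S(0.9n)}{\#B_S(n)} \leq \exp\bigl(-0.1\lambda_S n + 0.9\epsilon n\bigr),
\]
whose right-hand side decays exponentially as soon as $\epsilon < \lambda_S/9$. The precise numerical constants $0.9$ and $0.05$ in the statement are then matched by choosing $\epsilon$ sufficiently small in terms of $\lambda_S > 1$, and by absorbing the base-change from $e$ to $\lambda_S$ into the threshold on $n$.

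I do not anticipate any real obstacle. The content is routine given Fekete's lemma, and the only step requiring any care is the bookkeeping between the natural exponential scales $e^{\lambda_S n}$ and $\lambda_S^n$ implicit in the formulation. The key conceptual input, beyond the trivial monotonicity $\#B_S(0.9n) \leq \#B_S(n)$, is that subadditivity pins down the exponential growth rate of $\#B_S(n)$ from below on the \emph{nose} (not merely asymptotically), which is precisely what converts the $\liminf$ hypothesis into an honest pointwise lower bound on the denominator.
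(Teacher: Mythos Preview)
Your argument is correct, and since the paper provides no proof (it states the fact as an immediate consequence of $\lambda_S > 1$), your Fekete/submultiplicativity derivation is precisely the standard way to fill in the omitted details. One remark: the inequality as printed, with exponent $+0.05n$, is vacuous because the left-hand side is at most $1$ while $\lambda_S > 1$; the intended statement is evidently $\#B_S(0.9n)/\#B_S(n) \le \lambda_S^{-0.05n}$ (this is what the paper actually uses later to conclude $\#B_S(0.9n)/\#B_S(n) \to 0$), and that exponential decay is exactly what your bound $\exp\bigl((-0.1\lambda_S + 0.9\epsilon)n\bigr)$ delivers once $\epsilon$ is taken small, since $\lambda_S > \ln \lambda_S$.
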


Let us fix some more constants for the proof. Let $E_{0}$ be as in Fact \ref{fact:WPDNonElt}. Let $K_{Lip} := \max_{s \in S} \|s\|_{X}$, let $\tau := \lim_{n} \|\varphi^{n}\|_{X} / n$ (so that $\|\varphi^{k}\|_{X} \ge k \tau$ for each $k$). Let $F_{0} := \|\varphi\|_{S}$. Finally, let $L_{0}$ be as in Proposition \ref{prop:subContN} for $K = 100(E_{0} + 1000\delta + K_{Lip})$, and \[
L_{1} := L_{0} + \frac{1}{\tau} 100(E_{0} + 1000\delta + K_{Lip}).
\]
This choice implies that:
\begin{fact}\label{fact:constChoice}
For each $L > L_{1}$, \[
d_{X} (x_{0}, \varphi^{L} x_{0}) - 40(E_{0} + 1000\delta + K_{Lip}) \ge \tau L - 40(E_{0} + 1000\delta + K_{Lip})  \ge 0.5 \tau L +140\delta.
\]
\end{fact}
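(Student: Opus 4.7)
The plan is to treat the chain as two inequalities and verify each directly from the definition of $L_1$.

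For the first inequality, $d_X(x_0, \varphi^L x_0) \ge \tau L$, I would invoke the parenthetical already noted in the setup. It follows from subadditivity: the sequence $a_n := \|\varphi^n\|_X$ satisfies $a_{m+n} \le a_m + a_n$ by the triangle inequality (since $\varphi$ is an isometry), so by Fekete's lemma $\tau = \lim_n a_n/n = \inf_n a_n/n$. In particular $\|\varphi^L\|_X \ge \tau L$ for every $L \in \N$.

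For the second inequality, $\tau L - 40(E_0 + 1000\delta + K_{Lip}) \ge 0.5 \tau L + 140\delta$, I would rearrange to the equivalent bound $0.5 \tau L \ge 40(E_0 + 1000\delta + K_{Lip}) + 140\delta$. Multiplying the hypothesis $L > L_1 = L_0 + \tfrac{100}{\tau}(E_0 + 1000\delta + K_{Lip})$ by $\tau$ gives $\tau L > \tau L_0 + 100(E_0 + 1000\delta + K_{Lip})$, and since $L_0 > 0$ this yields $\tau L > 100(E_0 + 1000\delta + K_{Lip})$ and hence $0.5 \tau L > 50(E_0 + 1000\delta + K_{Lip})$. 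The leftover slack of $10(E_0 + 1000\delta + K_{Lip}) \ge 10000\delta$ comfortably absorbs the residual $140\delta$.

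There is no real obstacle here: Fact \ref{fact:constChoice} is pure bookkeeping, pre-computing the quantitative margin between $d_X(x_0, \varphi^L x_0)$ and the universal constants that the subsequent alignment and fellow-travelling estimates (Facts \ref{fact:fellowAlign}--\ref{fact:GromProdFellow}) will consume. The only point to keep in mind is that $\tau$ is an infimum of $\|\varphi^n\|_X/n$ and not merely a limit, so the inequality $\|\varphi^L\|_X \ge \tau L$ holds at every finite scale $L$, not just asymptotically.
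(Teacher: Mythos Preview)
Your argument is correct and is precisely what the paper intends: the paper states this Fact without proof, simply as an immediate consequence of the definition of $L_{1}$ and the parenthetical $\|\varphi^{k}\|_{X} \ge k\tau$, so your direct verification via subadditivity and unwinding the constant is exactly the expected justification.
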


Given $L, \epsilon>0$, we define 
\[
\begin{aligned}
\mathcal{V}_{L, \epsilon}(n) &:= \left\{ g \in B_{S}(n)  :\begin{array}{c} \,\,\textrm{there exist $h_{1}, \ldots, h_{\epsilon n} \in G$ such that
} \\ \textrm{$(x_{0}, h_{1}\Upsilon_{L }, \ldots, h_{\epsilon n} \Upsilon_{L }, gx_{0})$ is $(6E_{0} + 300\delta)$-aligned}\end{array} \right\}, \\
\mathcal{BAD}_{L, \epsilon}(n) &:= B_{S}(n) \setminus \big( B_{S}(0.9n) \cup \mathcal{V}_{L, \epsilon}(n) \big).
\end{aligned}
\]

\begin{lem}\label{lem:bad}
For each $L > L_{1}$ and $\epsilon>0$, we have\[
\limsup_{n} \frac{\#\mathcal{BAD}_{L, \epsilon}(n) }{\#B_{S}(n)} < 5 \cdot (2E_{0} + 4L F_{0} + 5) \cdot  (\#S)^{E_{0} + 3L F_{0}+ 4} \cdot \epsilon.
\]\end{lem}

\begin{proof}[Proof of Lemma \ref{lem:bad}]
Let us define a map \[
F : Dom(F) := \mathcal{BAD}_{L , \epsilon}(n) \times \{1, \ldots, 0.9n\} \,\rightarrow \,B_{S}(n)
\] as follows. Given $(g, i) \in \mathcal{BAD}_{L , \epsilon}(n) \times \{1, \ldots, 0.9n\}$, we first fix a $d_{S}$-geodesic representative $g = a_{1} a_{2} \cdots a_{\|g\|_{S}}$. By Fact \ref{fact:WPDNonElt}, there exist $s=s(g, i)$ and $t=t(g,i)$ in $S \cup \{id\}$ such that \[
\big(s^{-1} \cdot (a_{1} \cdots a_{i})^{-1} x_{0}, \varphi^{L } x_{0}\big)_{x_{0}} < E_{0},
\big(\varphi^{-L } x_{0}, t \cdot a_{i + F_{0} L  + 3} \cdots a_{\|g\|} x_{0}\big)_{x_{0}} < E_{0}.
\]
We then define \[
h(g, i) := a_{1} \cdots a_{i} \cdot s, \,\,h'(g, i) := t\cdot a_{i + F_{0} L  + 3} \cdots a_{\|g\|}, \,\, F(g, i) := h(g, i) \varphi^{L } h'(g, i).
\]
Note that $F(g, i) \in B_{S}(n)$ because \[\begin{aligned}
\|F(g, i)\|_{S}& \le \|h(g, i)\|_{S} + \| \varphi^{L }\|_{S} + \|h'(g, i)\|_{S} \\
&\le (i+1) + F_{0}L  + (\|g\|_{S} - i - F_{0}L  - 2) + 1 \le \|g\|_{S} \le n.
\end{aligned}
\]

Before the proof, we first declare \[
T:=  (2E_{0} + 4L F_{0} + 5) \cdot  \# B_{S}(E_{0} + 2L F_{0}) \cdot \# B_{S}(F_{0} L  + 4)
\] 
and $R_{0} = R_{0}(L , 4/\epsilon)$ as in Proposition \ref{prop:subContN}.

\begin{claim}\label{claim:maxRed}
Let $m$ be the maximum number of elements $(g_{1}, i_{1}), \ldots, (g_{m}, i_{m}) \in \mathcal{BAD}_{L , \epsilon}(n) \times \{1, \ldots, 0.9n\}$ such that \begin{enumerate}
\item $F(g_{1}, i_{1}) = \ldots = F(g_{m}, i_{m}) =: U$,
\item $\Big(x_{0}, \,h(g_{1}, i_{1}) \cdot  \Upsilon_{L },\, \ldots, \, h(g_{m}, i_{m}) \cdot  \Upsilon_{L }, \,Ux_{0}\Big)$ is $6(E_{0} + 30\delta)$-aligned.
\end{enumerate}
Then $ \# F^{-1} (U) \le 2T \cdot m$ holds for each $U \in B_{S}(n)$.
\end{claim}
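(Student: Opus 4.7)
The plan is to assign each $(g, i) \in F^{-1}(U)$ to a slot $j \in \{1, \ldots, m\}$ where $h(g, i)$ is $d_S$-close to $h(g_j, i_j)$, and then count preimages per slot. Write $\gamma_j := h(g_j, i_j) \Upsilon_{L}$.

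For the assignment, first observe that by the choice of $s, t$ via Fact \ref{fact:WPDNonElt}(1), the Gromov products $(h(g, i)^{-1} x_0, \varphi^L x_0)_{x_0} < E_0$ and $(\varphi^{-L} x_0, h'(g, i) x_0)_{x_0} < E_0$ force the projections of $x_0$ and $U x_0$ onto $h(g, i) \Upsilon_{L}$ to lie near its two endpoints (Fact \ref{fact:thinTri}), whence $(x_0, h(g, i) \Upsilon_{L}, U x_0)$ is $(E_0 + 8\delta)$-aligned. Attempting to insert $h(g, i) \Upsilon_{L}$ into $(x_0, \gamma_1, \ldots, \gamma_m, U x_0)$ between $\gamma_j$ and $\gamma_{j+1}$ (with $\gamma_0 := \{x_0\}$ and $\gamma_{m+1} := \{U x_0\}$) for each $j = 0, \ldots, m$ must fail by maximality of $m$; applying Fact \ref{fact:Behr} to the endpoints of $h(g, i) \Upsilon_{L}$ shows that at each $j$ exactly one of the two new alignments $(\gamma_j, h(g, i) \Upsilon_{L})$ and $(h(g, i) \Upsilon_{L}, \gamma_{j+1})$ must hold. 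Since the former holds at $j = 0$ but fails at $j = m$, there is an index $j^\dagger \in \{1, \ldots, m\}$ at which $(\gamma_{j^\dagger - 1}, h(g, i)\Upsilon_{L})$ holds while $(\gamma_{j^\dagger}, h(g, i)\Upsilon_{L})$ fails; the dichotomy at position $j^\dagger - 1$ then forces $(h(g, i)\Upsilon_{L}, \gamma_{j^\dagger})$ to fail as well, so that $\pi_{\gamma_{j^\dagger}}(h(g, i) \Upsilon_{L})$ meets the interior of $\gamma_{j^\dagger}$ with diameter exceeding $E_0$ (after Corollary \ref{cor:monotone}).

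Translating by $h(g_{j^\dagger}, i_{j^\dagger})^{-1}$, the element $w := h(g_{j^\dagger}, i_{j^\dagger})^{-1} h(g, i)$ satisfies $\diam \pi_{\Upsilon_{L}}(\{w x_0, w \varphi^L x_0\}) > E_0$. Applying Fact \ref{fact:WPDNonElt}(2) to $w$ with $n = L$ produces $a, b \in \{0, 1, \ldots, L\}$ with $d_S(\varphi^a, w \varphi^b) < E_0$, hence $\|w\|_S \le E_0 + \|\varphi^a\|_S + \|\varphi^b\|_S \le E_0 + 2 L F_0$, i.e.\ $d_S(h(g, i), h(g_{j^\dagger}, i_{j^\dagger})) \le E_0 + 2 L F_0$. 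For the counting step, fix $j^\dagger \in \{1, \ldots, m\}$: there are at most $\#B_S(E_0 + 2LF_0)$ candidates for $h(g, i) \in h(g_{j^\dagger}, i_{j^\dagger}) \cdot B_S(E_0 + 2LF_0)$; given $h(g, i)$, the constraints $|i - \|h(g, i)\|_S| \le 1$, $|\|h(g, i)\|_S - \|h(g_{j^\dagger}, i_{j^\dagger})\|_S| \le E_0 + 2LF_0$, and $|\|h(g_{j^\dagger}, i_{j^\dagger})\|_S - i_{j^\dagger}| \le 1$ pin down $i$ to at most $2E_0 + 4LF_0 + 5$ integers; and given $(h(g, i), i, U)$, the residual data $(s, t, a_{i+1}, \ldots, a_{i + F_0 L + 2})$ bundles into $W := s^{-1} a_{i+1} \cdots a_{i + F_0 L + 2} t^{-1}$ of $d_S$-norm at most $F_0 L + 4$, from which $g = h(g, i) \cdot W \cdot \varphi^{-L} h(g, i)^{-1} U$ is recovered, offering at most $\#B_S(F_0 L + 4)$ choices. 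Multiplying gives $T$ preimages per slot; a factor $2$ covers the symmetric alternative in which the parity argument places $h(g, i)$ close to $h(g_{j^\dagger}, i_{j^\dagger}) \varphi^L$ rather than to $h(g_{j^\dagger}, i_{j^\dagger})$, yielding $\# F^{-1}(U) \le 2 T m$.

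The chief obstacle is the assignment step: one must upgrade the point-version Fact \ref{fact:Behr} to apply to the geodesic translates $h(g, i) \Upsilon_{L}$ (via its action on each endpoint, combined with Fact \ref{fact:Behrstock} to control carried-over aligned constants), track those constants across all $m + 1$ candidate insertion positions, and ultimately extract a projection diameter strictly exceeding the $E_0$ threshold required by Fact \ref{fact:WPDNonElt}(2). The specific alignment constant $6(E_0 + 30\delta)$ and the length requirement on $\Upsilon_{L}$ packaged into Fact \ref{fact:constChoice} appear to be tuned precisely so that this bootstrapping produces the clean numerical bound $E_0 + 2LF_0$ built into $T$.
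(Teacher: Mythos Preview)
Your assignment step has a structural gap that is not just bookkeeping. You want to show that for each $(g,i) \in F^{-1}(U)$ there is some $j^\dagger$ with $\diam \pi_{\gamma_{j^\dagger}}(h(g,i)\Upsilon_L) > E_0$, and your mechanism is that inserting $h(g,i)\Upsilon_L$ anywhere into $(x_0, \gamma_1, \ldots, \gamma_m, Ux_0)$ must fail by maximality of $m$. But $m$ is maximal for the \emph{specific} constant $6(E_0+30\delta)$, whereas every application of Fact~\ref{fact:Behr} you would use to certify the two new alignments adds $60\delta$. So the inserted sequence is only aligned at a strictly larger constant and does not contradict maximality of $m$. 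Concretely, nothing rules out the scenario in which the subsegment of $[x_0, Ux_0]$ fellow-traveling $h(g,i)\Upsilon_L$ lies in a gap between those fellow-traveling $\gamma_j$ and $\gamma_{j+1}$, far from both: then $\pi_{\gamma_k}(h(g,i)\Upsilon_L)$ is small for every $k$, insertion at position $j$ succeeds at a slightly worse constant, and your parity argument produces no $j^\dagger$. Relatedly, Fact~\ref{fact:Behr} gives ``at least one'' of the two alignments, not ``exactly one''; you are implicitly using maximality again to rule out ``both hold'', which hits the same constant mismatch. Your claim that the former alignment ``fails at $j=m$'' is likewise unjustified for the same reason.

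The paper sidesteps this by changing which maximality is invoked. It first passes to subsegments $\gamma(g,i) \subset [x_0, Ux_0]$ that $(E_0+30\delta)$-fellow-travel each $h(g,i)\Upsilon_L$, then takes a maximal subset $\mathcal{A} \subset F^{-1}(U)$ whose subsegments pairwise overlap by at most $12(E_0+30\delta)$. Maximality of $\mathcal{A}$ (not of $m$) forces every $\gamma(g,i)$ to overlap substantially with some $\gamma(\mathfrak{g},\mathfrak{i})$ for $(\mathfrak{g},\mathfrak{i}) \in \mathcal{A}$; Fact~\ref{fact:fellowOverlap} and Fact~\ref{fact:WPDNonElt}(2) then give the $d_S$-closeness you want, and your counting yields $\#F^{-1}(U) \le T \cdot \#\mathcal{A}$. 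Separately, the odd-indexed half of $\mathcal{A}$ (ordered along $[x_0, Ux_0]$) has pairwise \emph{disjoint} subsegments, so Fact~\ref{fact:fellowAlign} manufactures an aligned sequence of length $\lceil \#\mathcal{A}/2\rceil$ at exactly the constant $6(E_0+30\delta)$ appearing in the claim, whence $m \ge \#\mathcal{A}/2$. The linear order on a single geodesic replaces your Behrstock-type parity argument, and the number $6(E_0+30\delta)$ is precisely six times the fellow-traveling constant, coming out of Fact~\ref{fact:fellowAlign} rather than from iterating Fact~\ref{fact:Behr}.
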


\begin{proof}[Proof of Claim \ref{claim:maxRed}]
Fix an arbitrary $U \in B_{S}(n)$. For each $(g, i) \in F^{-1}(U)$, we have:\begin{enumerate}
\item $(x_{0}, h(g, i) \varphi^{L } x_{0})_{h(g, i) x_{0}} < E_{0}$; hence $\pi_{h(g, i)\Upsilon_{L }}(x_{0})$ is $(E_{0}+8\delta)$-close to $h(g, i) x_{0}$ (Fact \ref{fact:thinTri}).
\item Similarly, the projection of $Ux_{0}$ onto $h(g, i)\Upsilon_{L }$ is $(E_{0}+8\delta)$-close to $h(g, i)\varphi^{L }x_{0}$. 
\item $h(g, i)x_{0}$ and $h(g, i)\varphi^{L }x_{0}$ are at least $\tau L $-apart, which is much larger than $20\delta$. 
\end{enumerate}
Now Fact \ref{fact:projectionCoarse} guarantees a subsegment $\gamma(g, i)$ of $[x_{0}, Ux_{0}]$ and a subsegment $\eta = [p, q]$ of $h(g, i) \Upsilon_{L }$ that are $20\delta$-fellow traveling. Here, $p$ and $h_{k}x_{0}$, and $q$ and $h_{k}\varphi^{L }x_{0}$ are pairwise $(E_{0} + 8\delta)$-close. Hence, $\gamma(g, i)$ and $h(g, i)\Upsilon_{L }$ are $(E_{0} + 30\delta)$-fellow traveling. It follows that $\gamma(g, i)$'s are longer than $\tau L  - 2(E_{0} + 30\delta) \ge 25 (E_{0} + 30\delta)$.

We now pick a maximal subset $\mathcal{A}$ of $F^{-1}(U)$ such that \[
\textrm{for any $(g, i), (g', i') \in \mathcal{A}$, $\gamma(g, i)$ and $\gamma(g', i')$ overlap for length at most $12(E_{0} + 30\delta)$}. 
\]
We claim that $\# F^{-1}(U) \le T \cdot \#\mathcal{A}$. To show this, pick an arbitrary $(g, i) \in F^{-1}(U)$. Let $a_{1} \cdots a_{\|g\|_{S}}$ be the geodesic representative for $g$ that was used when defining\[
h(g, i)  := a_{1} \cdots a_{i} \cdot s(g, i), \quad h(g, i)' := t(g, i) \cdot a_{i + F_{0}L  + 3} + \cdots a_{\|g\|_{S}}.
\]
By the maximality of $\mathcal{A}$, there exists $(\mathfrak{g}, \mathfrak{i}) \in \mathcal{A}$ such that $\gamma(g, i)$ and $\gamma(\mathfrak{g}, \mathfrak{i})$ overlap for length at least $12(E_{0} + 30\delta)$. Recall that $h(g, i) \Upsilon_{L }$ and $h(\mathfrak{g}, \mathfrak{i})\Upsilon_{L }$ are $(E_{0} + 30\delta)$-fellow traveling $\gamma(g, i)$ and $\gamma(\mathfrak{g}, \mathfrak{i})$, respectively. By Fact \ref{fact:fellowOverlap}, $\pi_{h(g, i)\Upsilon_{L }} (h(\mathfrak{g}, \mathfrak{i}) x_{0})$ appears earlier than $\pi_{h(g, i)\Upsilon_{L }} (h(\mathfrak{g}, \mathfrak{i}) \varphi^{L }x_{0})$. Moreover, they are $(12(E_{0} + 30\delta) - 10(E_{0} + 30\delta))$-apart and  hence $E_{0}$-apart. By Fact \ref{fact:WPDNonElt}, we conclude that $\varphi^{-a} \cdot h(g,i)^{-1} h(\mathfrak{g}, \mathfrak{i}) \varphi^{b} \in B_{S}(E_{0})$ for some $a, b \in \{0, \ldots, L \}$. We conclude that \[
h(g, i) \in h(\mathfrak{g}, \mathfrak{i}) \cdot \{\varphi^{a} : a = 0, \ldots, L \} \cdot B_{S}(E_{0}) \cdot \{\varphi^{-a} : a = 0, \ldots, L \} \subseteq  h(\mathfrak{g}, \mathfrak{i})  B_{S}(E_{0} + 2L F_{0}).
\]
This also implies that $\|h(g, i)\|_{S}$ and $\|h(\mathfrak{g}, \mathfrak{i})\|_{S}$ differ by at most $E_{0} + 2 L F_{0}$, and hence \[
i \in \big[\mathfrak{i} - (E_{0} + 2 L F_{0} + 2), \mathfrak{i} + (E_{0} + 2 L F_{0}+2)\big].
\]
Note also that
\[
h(g, i)' = \varphi^{-L } h(g, i)^{-1} \cdot U
\]
is determined as soon as $h(g, i)$ is determined.

Finally, in order to reconstruct $g = a_{1} \cdots a_{\|g_{l}\|_{S}}$ from $h(g, i)$ and $h(g, i)$, it suffices to pick $c := s_{l}^{-1} a_{i_{l}+1} \cdots a_{i_{l} + F_{0}L  + 2} t^{-1} \in B_{S}(F_{0} L  + 4)$ and multiply $h(g, i)$, $c$ and $h(g', i')$.  In summary, we have\[\begin{aligned}
&F^{-1}(U)  &\subseteq \bigcup_{(\mathfrak{g}, \mathfrak{i}) \in \mathcal{A}} \bigg( \Big\{ h(\mathfrak{g}, \mathfrak{i})  f \cdot c \cdot \varphi^{-L } f^{-1} h(\mathfrak{g}, \mathfrak{i})^{-1}  U : f \in B_{S}(E_{0} + 2L F_{0}), \,c \in B_{S}(F_{0}L  + 4) \Big\} \times I(\mathfrak{i}) \bigg)
\end{aligned}
\]
where $I(\mathfrak{i}):= [\mathfrak{i} - (E_{0} + 2 L F_{0} + 2), \mathfrak{i} + (E_{0} + 2 L F_{0} + 2)]$. From this, we conclude $\#F^{-1}(U) \le T \cdot \#\mathcal{A}$.

Next, let us enumerate $\mathcal{A}$ as \[
\mathcal{A} = \{ (\mathfrak{g}_{1}, \mathfrak{i}_{1}), (\mathfrak{g}_{2},  \mathfrak{i}_{2}), \ldots\}
\]
so that $\gamma(\mathfrak{g}_{l},   \mathfrak{i}_{l})$ starts earlier than $\gamma(\mathfrak{g}_{l+1},  \mathfrak{i}_{l+1})$ along $[x_{0}, Ux_{0}]$, for each $l$. Then the beginning point of $\gamma(\mathfrak{g}_{2},   \mathfrak{i}_{2})$ is later than that of $\gamma(\mathfrak{g}_{1},  \mathfrak{i}_{1})$ and earlier than that of $\gamma(\mathfrak{g}_{3},   \mathfrak{i}_{3})$. ($\ast$) Moreover, $\gamma(\mathfrak{g}_{2},   \mathfrak{i}_{2})$ does not contain $\gamma(\mathfrak{g}_{3},   \mathfrak{i}_{3})$, as their overlap should not be longer than $12(E_{0}+30\delta)$ while $\gamma(\mathfrak{g}_{3},   \mathfrak{i}_{3})$ is longer than $25(E_{0} + 30\delta)$. Hence, the ending point of $\gamma(\mathfrak{g}_{2},   \mathfrak{i}_{2})$ is earlier than that of $\gamma(\mathfrak{g}_{3},   \mathfrak{i}_{3})$. ($\ast\ast$)

At this point, if $\gamma(\mathfrak{g}_{1},   \mathfrak{i}_{1})$ and $\gamma(\mathfrak{g}_{3},   \mathfrak{i}_{3})$ intersect, then $\gamma(\mathfrak{g}_{3},   \mathfrak{i}_{3})$ is completely covered by $\gamma(\mathfrak{g}_{1},   \mathfrak{i}_{1})$ and $\gamma(\mathfrak{g}_{3},   \mathfrak{i}_{3})$  due to $(\ast)$ and $(\ast \ast)$. We would then have \[
\diam_{X}\big(\gamma(\mathfrak{g}_{1},   \mathfrak{i}_{1}) \cap \gamma(\mathfrak{g}_{2},   \mathfrak{i}_{2}) \big) + 
\diam_{X}\big(\gamma(\mathfrak{g}_{2},   \mathfrak{i}_{2})  \cap \gamma(\mathfrak{g}_{3},   \mathfrak{i}_{3}) \big)  \ge \diam_{X}\big(\gamma(\mathfrak{g}_{2},   \mathfrak{i}_{2})\big) \ge 25(E_{0} +30\delta),
\]
which contradicts to the bound $12(E_{0} + 30\delta)$ on $\diam_{X}\big(\gamma(\mathfrak{g}_{1},   \mathfrak{i}_{1}) \cap \gamma(\mathfrak{g}_{2},   \mathfrak{i}_{2}) \big)$ and $\diam_{X}\big(\gamma(\mathfrak{g}_{2},   \mathfrak{i}_{2})  \cap \gamma(\mathfrak{g}_{3},   \mathfrak{i}_{3}) \big)$. Hence, we conclude that $\gamma(\mathfrak{g}_{1},   \mathfrak{i}_{1})$ and $\gamma(\mathfrak{g}_{3},   \mathfrak{i}_{3})$ do not intersect. 

With the same logic, we conclude that $\gamma(\mathfrak{g}_{l}, \mathfrak{i}_{l})$'s for odd integers $l$ are disjoint subsegments of $[x_{0}, Ux_{0}]$,  in order from left to right along $[x_{0}, Ux_{0}]$. Recall again that $\gamma(\mathfrak{g}_{l}, \mathfrak{i}_{l})$ and $h(\mathfrak{g}_{l}, \mathfrak{i}_{l})) \Upsilon_{L }$ are $(E_{0} + 30\delta)$-fellow traveling. Now Fact \ref{fact:fellowAlign} tells us that \[
\big(x_{0}, h(\mathfrak{g}_{1}, \mathfrak{i}_{1})\Upsilon_{L }, \,h(\mathfrak{g}_{3}, \mathfrak{i}_{3}) \Upsilon_{L }, \, \ldots, \,h(\mathfrak{g}_{2\lceil\#\mathcal{A}/2\rceil + 1}, \mathfrak{i}_{2\lceil\#\mathcal{A}/2\rceil + 1})\Upsilon_{L }, \,\,Ux_{0}\big)
\]
is $6(E_{0} + 30\delta)$-aligned. This implies that $m \ge \lceil\#\mathcal{A}/2\rceil  \ge \frac{1}{2T} \#F^{-1} (U)$ as desired.
\end{proof}

Now let $(g_{1}, i_{1}), \ldots, (g_{m}, i_{m}) \in \mathcal{BAD}_{L , \epsilon}(n) \times \{1, \ldots, 0.9n\}$ the elements as in Claim \ref{claim:maxRed}: \begin{enumerate}
\item $F(g_{1}, i_{1}) = \ldots = F(g_{m}, i_{m}) =: U$,
\item $\Big(x_{0}, \,h(g_{1}, i_{1}) \cdot  \Upsilon_{L },\, \ldots, \, h(g_{m}, i_{m}) \cdot  \Upsilon_{L }, \,Ux_{0}\Big)$ is $6(E_{0} + 30\delta)$-aligned.
\end{enumerate}

It remains to prove that $m < 2\epsilon n$ for large enough $n$. We will prove it for every $n \ge 32R_{0} K_{Lip} / \tau L  \epsilon$. Suppose to the contrary that $m \ge 2\epsilon n$. Let us denote the $d_{S}$-geodesic representative used for $g_{1}$ by  $a_{1}\cdots a_{\|g_{1}\|_{S}}$, so that $h(g_{1}, i_{1}) = a_{1} \cdots a_{i_{1}} s(g_{1}, i_{1})$. We will abbreviate $h(g_{l}, i_{l})$ by $h_{l}$, $h'(g_{l}, i_{l})$ by $h_{l}'$, $s(g_{l}, i_{l})$ by $s_{l}$ and $t(g_{l}, i_{l})$ by $t_{l}$.

We focus on a particular vertex on the $d_{S}$-geodesic $Ug_{1}^{-1} \cdot [x_{0}, g_{1}]_{S}$, namely \[
v := Ug_{1}^{-1} \cdot a_{1} \cdots a_{i_{1} + F_{0}L  + 2} = a_{1} \cdots a_{i_{1}} \cdot s_{1}\cdot \varphi^{L } \cdot t_{1}x_{0} = h_{1} \cdot \varphi^{L } \cdot t_{1}.
\]
Then $vx_{0}$ is $K_{Lip}$-close to $h_{1} \cdot \varphi^{L } x_{0}$. Since $\big(h_{1}\varphi^{L} x_{0}, \,h_{2}[x_{0}, \varphi^{L}x_{0}]\big)$ is $(6E_{0} + 180\delta)$-aligned, Fact \ref{fact:projectionCoarse}(1) tells us that $\big(vx_{0}, \,h_{2}\Upsilon_{L }\big)$ is $(6E_{0} + 200\delta + K_{Lip})$-aligned.

Now, Fact \ref{fact:Behr} tells us that either: \begin{enumerate}
\item $(Ug_{1}^{-1} x_{0}, h_{\epsilon n} \Upsilon_{L })$ is $(6E_{0} + 240\delta)$-aligned,  or 
\item $(h_{\epsilon n - 1} \Upsilon_{L }, Ug_{1}^{-1} x_{0})$ is $(6E_{0} + 240\delta)$-aligned. 
\end{enumerate} In Case (1), we conclude that \[
\big(x_{0}, \,g_{1} U^{-1} h_{\epsilon n} \Upsilon_{L }, \,g_{1} U^{-1} h_{\epsilon n+1} \Upsilon_{L },\, \ldots,\, g_{1} U^{-1} h_{m} \Upsilon_{L },\, g_{1} x_{0} \big)
\]
is $(6E_{0} + 240\delta)$-aligned. This contradicts the fact that $g_{1} \notin \mathcal{V}_{L , \epsilon}(n)$.

In the latter case, we have: \[
(vx_{0}, h_{1} \Upsilon_{L }, \ldots, h_{\epsilon n -1} \Upsilon_{L }, y_{i})
\]
is $(6E_{0} + 240\delta + K_{Lip})$-aligned for $y_{1} = U g_{1}^{-1} x_{0}$ and $y_{2} = Ux_{0}$. Here, the alignment of $(h_{\epsilon n-1} \Upsilon_{L }, y_{2})$ is due to Fact \ref{fact:Behrstock}. Let $P_{1}$ be the first half of the geodesic $Ug_{1}^{-1} [x_{0}, g_{1} x_{0}]$ connecting $Ug_{1}^{-1} x_{0}$ to $vx_{0}$, and let $P_{2}$ be the latter half connecting $vx_{0}$ to $Ux_{0}$. Then $Len(P_{1}) + Len(P_{2}) \le \|g_{1}\|_{S} \le n$.

Recall that $R_{0} = R_{0}(L , 4/\epsilon)$ is chosen as in Proposition \ref{prop:subContN} and that $L  \ge L_{1}$ is longer than $L_{0}(K)$ for $K=6E_{0} + 240\delta + K_{Lip}$. Since $Len(P_{1}) + Len(P_{2}) \le n \le (4/\epsilon) \cdot (\epsilon n/4)$, the paths should satisfy the first alternative in Proposition \ref{prop:subContN} for $k=\epsilon n/4$. In particular, there exists $i \in  \{0.5\epsilon n, \ldots, 0.75 \epsilon n\}$ such that $d_{S}(P_{1}, h_{i}), d_{S}(P_{2}, h_{i}) \le R_{0}$. Let $u_{1} \in P_{1}$ and $u_{2} \in P_{2}$ be the vertices realizing the distance.

Meanwhile, note that $(vx_{0}, h_{2} \Upsilon_{L }, \ldots, h_{i-1} \Upsilon_{L }, h_{i} x_{0})$ is $(6E_{0} + 200\delta + K_{Lip})$-aligned. Fact \ref{fact:GromProdFellow} implies that 
 there exist $i-2 \ge 0.25\epsilon n$ disjoint subsegments of $[vx_{0}, h_{i}x_{0}]$, each longer than $\tau L  - 2(6E_{0} + 200\delta + K_{Lip}) - 160\delta \ge 0.5\tau L $. This implies that \[
d_{S} (h_{i}, v) \ge \frac{1}{K_{Lip}} d_{X}(vx_{0}, h_{i} x_{0}) \ge\frac{1}{K_{Lip}} \cdot 0.5 \tau L \cdot 0.35 \epsilon n.
\]
This implies that \[
d_{S}(u_{1}, v) \ge d_{S}(h_{i}, v) - d_{S} (h_{i}, u_{1}) \ge \frac{\tau L  \epsilon n}{8K_{Lip}}  - R_{0} \ge 3R_{0}. 
\]
Meanwhile, $u_{1}, v$ and $u_{2}$ are aligned along a $d_{S}$-geodesic $U g_{1}^{-1} [x_{0}, g_{1} x_{0}]$. This leads to a contradiction \[
2R_{0} \ge d_{S}(u_{1}, h_{i}) + d_{S}(u_{2}, h_{i}) \ge  d_{S}(u_{1}, u_{2}) \ge d_{S}(u_{1}, v) \ge 3R_{0}
\]

In conclusion, $m \le 2 \epsilon n$ holds for $m$ in Claim \ref{claim:maxRed} when $n \ge 32R_{0} K_{Lip} / \tau L  \epsilon$. This implies that \[
\# \mathcal{BAD}_{L , \epsilon}(n) \times 0.9n = \# \big(\operatorname{Dom} F \big)= \sum_{U \in B_{S}(n)} \big(\# F^{-1}(U) \big) \le 4T \epsilon n \cdot \# B_{S}(n).
\]
We conclude  \[
\frac{\# \mathcal{BAD}_{L , \epsilon}(n)}{\#B_{S}(n)} \le 5T \epsilon. \quad (\forall n \ge 32R_{0} K_{Lip} / \tau L  \epsilon) \qedhere
\]
 \end{proof}
 
Let us now define  \[
\mathcal{W}_{L , \epsilon}(n) := \left\{ g \in B_{S}(n)  :\begin{array}{c} \,\,\exists h_{1}, \ldots, h_{\epsilon n} \in G\,\,\textrm{such that the sequences $(x_{0}, h_{1}\Upsilon_{L }, \ldots, h_{\epsilon n} \Upsilon_{L }, gx_{0})$,}\\
\textrm{$(g^{-1} x_{0}, h_{1}\Upsilon_{L })$ and $(h_{\epsilon n}\Upsilon_{L }, g^{2} x_{0})$ are each $(6E_{0} + 360\delta)$-aligned}
\end{array} \right\}.
\]

\begin{lem}\label{lem:WUseful}
Let $L  > L_{1}$. Then the following is true for $g \in \mathcal{W}_{L , \epsilon}(n)$:\begin{enumerate}
\item $g$ is a loxodromic isometry on $X$ with $\tau_{X}(g) \ge 0.5\tau L  \epsilon n$.
\item $g$ has the WPD property and hence is Morse (\cite[Theorem 1]{sisto2016quasi-convexity}).
\item There exists a conjugate $\psi$ of $\varphi$ such that for each large enough $i$, the projections of $g^{-i} x_{0}$ and $g^{i}x_{0}$ onto $[\psi^{-i}x_{0}, \psi^{i} x_{0}]$ are at least $\tau L /2$-apart, with the former one coming first.
\end{enumerate}
\end{lem}

\begin{proof}
For the first item, we claim that \begin{equation}\label{eqn:gromSeq}
\big( \ldots, g^{-1} \gamma_{1}, \ldots, g^{-1} \gamma_{\epsilon n}, \gamma_{1}, \ldots, \gamma_{\epsilon n}, g \gamma_{1}, \ldots, g \gamma_{\epsilon n}, \ldots \big)
\end{equation}
is $(12E_{0} + 900\delta)$-aligned, where $\gamma_{i} := h_{i} \Upsilon_{L }$. The only nontrivial part is the $(12E_{0} + 900\delta)$-alignment of $(\gamma_{\epsilon n}, g\gamma_{1})$. First, observe that $(\gamma_{\epsilon n}, gx_{0})$ and $(\gamma_{\epsilon n}, g^{2}x_{0})$ are each $(6E_{0} + 360\delta)$-aligned. By Corollary \ref{cor:monotone}, $(\gamma_{\epsilon n}, z)$ is $(6E_{0} + 420\delta)$-aligned for each $z \in [gx_{0}, g^{2} x_{0}]$.

Meanwhile, Fact \ref{fact:GromProdFellow} implies that $g\gamma_{1}$ is contained in the $(6E_{0} + 440\delta)$-neighborhood of $[x_{0}, gx_{0}]$. Fact \ref{fact:projectionCoarse}(1) implies that $\pi_{\gamma_{\epsilon n}} (g \gamma_{1})$ is contained in the $(12E_{0} + 900\delta)$-long ending subsegment of $\gamma_{\epsilon n}$.

By a symmetric argument, we can similarly observe that $\pi_{g\gamma_{1}} (\gamma_{\epsilon n})$ is contained in the $(12E_{0} + 900\delta)$-long ending subsegment of $g\gamma_{1}$. This concludes the desired alignment.

Now Fact \ref{fact:GromProdFellow} applies to the $(12E_{0} + 900\delta)$-aligned sequence $(x_{0}, \gamma_{1}, \ldots, \gamma_{\epsilon n}, g \gamma_{1}, \ldots, g \gamma_{\epsilon n}, \ldots, g^{k} x_{0})$ 
and concludes that \[\begin{aligned}
d_{X}(x_{0}, g^{k} x_{0}) &\ge \sum_{i=0}^{k-1} \sum_{j =1}^{\epsilon n} \big( \diam_{X}(g^{i} \gamma_{j}) - (2(12E_{0} + 900\delta) + 160\delta) \big) \\
&\ge \epsilon n k \cdot \big(\tau L  -  (2(12E_{0} + 900\delta)+ 160\delta) \big) \ge  \epsilon n k \cdot \frac{1}{2}\tau L .
\end{aligned}
\]
This implies that $\tau_{X}(g) \ge 0.5 \tau L  \epsilon n$.

In order to discuss WPD property, let $K >0$. Because $g$ is loxodromic, there exists $N$ such that $d_{X}(g^{\pm N} x_{0}, h_{1}\Upsilon_{L }) \ge K + 1000\delta$. We then claim that $Stab_{K}(x_{0}, g^{2N} x_{0})$ is finite. Suppose to the contrary that $Stab_{K}(x_{0}, g^{2N} x_{0})$ is not contained in any finite $d_{S}$-metric ball. Then we can take infinitely many distinct elements $g_{1}, g_{2}, \ldots \in Stab_{K}(x_{0}, g^{2N} x_{0})$.

Combining the alignment of the sequence in Display \ref{eqn:gromSeq} and Fact \ref{fact:Behrstock}, we observe that \[(x_{0}, g^{N} h_{1} \Upsilon_{L }, g^{2N} x_{0})
\] is $(12E_{0} + 960\delta)$-aligned. Since $d_{X}(x_{0}, g_{i}x_{0} ) \le K \le  d_{X} (x_{0}, g^{N} h_{1} \Upsilon_{L }) - 1000\delta$, the contraposition of Fact \ref{fact:projectionCoarse}(2) tells us that $\pi_{g^{N} h_{1} \Upsilon_{L }}(\{x_{0}, g_{i} x_{0}\})$ has diameter at most $20\delta$. Similarly, $\pi_{g^{N} h_{1} \Upsilon_{L }}(\{g^{2N}_{0}, g_{i} g^{2N} x_{0}\})$ is also $20\delta$-small. Hence, $(g_{i} x_{0}, g^{N} h_{1} \Upsilon_{L }, g_{i} g^{2N} x_{0})$ is also $(12E_{0} + 980\delta)$-aligned. Hence, $[g_{i} x_{0}, g_{i} g^{2N} x_{0}]$ contains a subsegment $\eta_{i}$ that is $(12E_{0} + 1060\delta)$-fellow traveling with $g^{N} h_{1} \Upsilon_{L }$.

Now, $g_{i}^{-1} \eta_{i}$'s are subsegments of $[x_{0}, g^{2N} x_{0}]$ that is longer than $\tau L  - 2(12E_{0} + 1060\delta) \ge 0.5\tau L $. Since $[x_{0}, g^{2N} x_{0}]$ is compact, by passing to subsequence, we may assume that $g_{i}^{-1} \eta_{i}$'s converge to a subsegment of $[x_{0}, g^{2N} x_{0}]$ of length at least $0.5 \tau L $. Also, these subsegments are $(12E_{0} + 1060\delta)$-fellow traveling with $g_{i}^{-1} g^{N} h_{1} \Upsilon_{L }$ and $g_{j}^{-1} g^{N} h_{1} \Upsilon_{L }$, respectively. Since $0.5\tau L  > 12 (12 E_{0} + 1060 \delta) + E_{0}$,  Fact \ref{fact:fellowOverlap} implies for large $i, j$ that $\pi_{g_{i}^{-1} g^{N} h_{1} \Upsilon_{L }} (g_{j}^{-1} g^{N} h_{1} \Upsilon_{L })$ is $E_{0}$-large and is orientation-matching. Now Fact \ref{fact:WPDNonElt}(2) implies that \[
h_{1}^{-1} g^{-N} g_{i} g_{j}^{-1} g^{N} h_{1} \subseteq B_{S}(E_{0} + 2L F_{0}).
\]
In particular, $g_{i} g_{j}^{-1}$ is uniformly bounded for every pair of $g_{i}, g_{j}$. This contradicts the infinitude of $Stab_{K}(x_{0}, g^{2N} x_{0})$. The WPD property of $g$ is now proven.

The third item holds for $\psi = h_{1} \varphi h_{1}^{-1}$. Indeed, when $N$ is sufficiently large, $[g^{-N} x_{0}, g^{N} x_{0}]$ and $[ h_{1} \varphi^{-N} h_{1}^{-1}  x_{0}, h_{1} \varphi^{N} h_{1} x_{0}]$ both contain subsegments that are $0.01\tau L $-fellow traveling with a $\tau L $-long geodesic $h_{1} \Upsilon_{L }$. We omit the detail.
\end{proof}

We now claim that $\mathcal{V}_{L , 3\epsilon}(n) \setminus \mathcal{W}_{L , \epsilon}(n)$ is non-generic. 

\begin{lem}\label{lem:VMinusW}
For each $L  > L_{1}$ and $\epsilon > 0$, there exists $\lambda > 1$ such that\[
\lim_{n\rightarrow +\infty} \frac{\# \mathcal{V}_{L , 3\epsilon}(n) \setminus \mathcal{W}_{L , \epsilon}(n)}{\#B_{S}(n)} \le \lambda^{-n}
\]
for all large enough $n$.
\end{lem}

\begin{proof}
Before the proof, let $R_{1} = R_{1}(L , 12/\epsilon)$ be as in Proposition \ref{prop:subContN}. Let us first define \[
\begin{aligned}
\mathcal{K}_{1} &:= \bigcup_{r =\epsilon n}^{n/2} \big\{ abc a^{-1} : a \in B_{S}(r), b \in B_{S}( n - 2r + 2R_{1}), c \in B_{S}(2R_{1})\big\}, \\
\mathcal{K}_{2} &:= \bigcup_{r, r'\ge 0, r+r' \le (1-\epsilon)n}\big\{ acba^{-1} : a \in B_{S}(r + 2R_{1}), b \in B_{S} (r' + 2R_{1}), c \in B_{S}(2R_{1})\big\}.
\end{aligned}
\]
We also define $\mathcal{K}_{i}^{-1} := \{g^{-1} : g \in \mathcal{K}_{i}\}$ for $i=1, 2$. Then we have 
\[
\# \big(\mathcal{K}_{1}  \cup \mathcal{K}_{2}  \cup \mathcal{K}_{1}^{-1}  \cup \mathcal{K}_{2}^{-1} \big) \lesssim 2 \cdot n^{2} \lambda_{S}^{(1-0.5\epsilon)n}.
\]
This is exponentially smaller than $\#B_{S}(n)$. 

It remains to prove that $\mathcal{V}_{L, 3\epsilon}(n) \setminus \big(\mathcal{K}_{1}  \cup \mathcal{K}_{2}  \cup \mathcal{K}_{1}^{-1}  \cup \mathcal{K}_{2}^{-1} \big)$ is contained in $\mathcal{W}_{L, \epsilon}(n)$. To show this, let $g \in \mathcal{V}_{L , 3\epsilon}(n) \setminus \big(\mathcal{K}_{1}  \cup \mathcal{K}_{2}  \cup \mathcal{K}_{1}^{-1}  \cup \mathcal{K}_{2}^{-1} \big) $. Then there exists $h_{1}, \ldots, h_{3\epsilon n} \in G$ such that \begin{equation}\label{eqn:hiAlignEx}
(x_{0}, h_{1} \Upsilon_{L }, \ldots, h_{3\epsilon n} \Upsilon_{L }, gx_{0})
\end{equation}
is $(6E_{0} + 300\delta)$-aligned. Then $(x_{0}, h_{i}  \Upsilon_{L }, gx_{0})$ is $(6E_{0} + 360\delta)$-aligned for each $i$ by Fact \ref{fact:Behrstock}.

Fact \ref{fact:Behr} guarantees that the following dichotomy holds: either\begin{enumerate}
\item $(g^{-1} x_{0}, h_{\epsilon n + 1} \Upsilon_{L })$ is $(6E_{0} + 360\delta)$-aligned, or 
\item $(h_{\epsilon n}  \Upsilon_{L }, g^{-1} x_{0})$ is $(6E_{0} + 360\delta)$-aligned.
\end{enumerate}
We claim that Case (1) holds. Suppose to the contrary that Case (2) holds. That means, \[
(x_{0}, h_{1} \Upsilon_{L }, \ldots, h_{\epsilon n} \Upsilon_{L },  g^{\pm 1} x_{0})\,\,\textrm{is $(6E_{0} + 360\delta)$-aligned}.
\]
Pick a $d_{S}$-geodesic path $P$ connecting $id$ to $g$. Then $g^{-1} P$ is a path connecting $g^{-1}$ to $id$. 

We now apply Proposition \ref{prop:subContN}. Since $Len(P) + Len(g^{-1} P) \le 2n \le (12/\epsilon) \cdot (\epsilon n /6)$, the first alternative in Proposition \ref{prop:subContN} should hold for $k = \epsilon n/6$. In particular, there exists $i \in \{\epsilon n / 2, \ldots, \epsilon n\}$ such that $d_{S}(h_{i}, P), d_{S}(h_{i}, g^{-1} P) \le R_{1}$. Let $v \in P$ and $g^{-1} u \in g^{-1} P$ be the vertices realizing the distance. Here, as before, the alignment of the sequence in Display \ref{eqn:hiAlignEx} implies that $[x_{0}, h_{i} x_{0}]$ contains $i$ disjoint subsegments longer than $\diam_{X}(\Upsilon_{L }) - 2(6E_{0} + 520\delta) \ge 0.5\tau L$. Hence, \[
d_{S}(id, h_{i}) \ge \frac{1}{K_{Lip}} d_{X} (x_{0}, h_{i} x_{0}) \ge \frac{1}{4K_{Lip}} \tau L  \epsilon n.
\]
This implies  \[
\|v\|_{S} \ge \|h_{i}\|_{S} - d_{S}(h_{i}, v) \ge \frac{1}{4K_{Lip}} \tau L  \epsilon n - R_{1} \ge \epsilon n. \quad(\textrm{when $n \ge R_{1}/\epsilon$})
\]
Meanwhile, since $(h_{i} x_{0}, h_{i+1} \Upsilon_{L }, \ldots, h_{3 \epsilon n} \Upsilon_{L }, gx_{0})$ is also aligned, we have \[
d_{S}(h_{i}, g) \ge \frac{1}{K_{Lip}} d_{X} (h_{i} x_{0}, gx_{0}) \ge \frac{1}{K_{Lip}} \tau L  \epsilon.
\]
This implies $d_{S}(v, g) \ge \epsilon n$. Note that $\|g^{-1} u\|_{S} = \|g\|_{S} - \|u\|_{S}$ and $\|v\|_{S}$ differ by at most $2R_{0}$. ($\ast$)

We now divide the cases:
\begin{enumerate}
\item $\epsilon n \le \|v\|_{S} \le \|g\|_{S}/2$. Recall that $id, u, v, g$ are on the same $d_{S}$-geodesic $P$.  This means\[
\|v^{-1} u\|_{S} = d_{S}(v, u) = \big| \|v\|_{S} - \|u\|_{S} \big| = \Big| \|v\|_{S} + \big( \|g^{-1} u\|_{S} - \|g\|_{S} \big)  \Big| .
\]
Thanks to $(\ast)$, we have \[
\Big| \|v\|_{S} + \big( \|g^{-1} u\|_{S} - \|g\|_{S} \big)  \Big| \le \big| 2\|v\|_{S} - \| g\|_{S} \big| + 2R_{0}  = \|g\|_{S} - 2\|v\|_{S} + 2R_{0}.
\]
Finally, $g^{-1} u$ and $v$ are $2R_{0}$-close so $u^{-1} g \cdot v \in B_{S}(2R_{0})$. This implies the contradiction \[
g = v \cdot (v^{-1} u) \cdot (u^{-1} g v) \cdot v^{-1} \in \mathcal{K}_{1}.
\]
\item $\|g\|_{S}/2 \le \|v\|_{S} \le \|g\|_{S} - \epsilon n$. In this case, $(\ast)$ implies that \[
\|u\|_{S} \le \|g\|_{S} - \|v\|_{S} + 2R_{0},\quad  \|u^{-1} v\|_{S} \le  \big| 2\|v\|_{S} - \| g\|_{S} \big| + 2R_{0} = 2\|v\|_{S} - \|g\|_{S} + 2R_{0}.
\]
We also have $u^{-1} g \cdot v^{-1} \in B_{S}(2R_{0})$. Note that $\|g\|_{S} - \|v\|_{S}$, $2\|v\|_{S} - \|g\|_{S}$ are positive integers whose sum is at most $\|v\|_{S} \le \|g\|_{S} - \epsilon n \le n - \epsilon n$.  These facts lead to a contradiction  \[
g = u \cdot (u^{-1} g v) \cdot (v^{-1} u) \cdot u^{-1} \in \mathcal{K}_{2}.
\]
\end{enumerate}

We can thus conclude that Case (1) holds. Meanwhile, Fact \ref{fact:Behr} asserts that either \begin{enumerate}[label=(\alph*)]
\item $( h_{2 \epsilon n} \Upsilon_{L }, g^{2} x_{0})$ is $(6E_{0} + 360\delta)$-aligned, or 
\item $(g^{2} x_{0},  h_{2 \epsilon n + 1} \Upsilon_{L })$ is $(6E_{0} + 360\delta)$-aligned.
\end{enumerate}
In Case (b), we are led to the alignment that
 \[
(g^{\pm 1} x_{0}, g^{-1}h_{2\epsilon n + 1} \Upsilon_{L }, \ldots, g^{-1} h_{3\epsilon n} \Upsilon_{L },  x_{0})\,\,\textrm{is $(6E_{0} + 360\delta)$-aligned}.
\]
A similar argument as before implies $g \in \mathcal{K}_{1}^{-1} \cup \mathcal{K}_{2}^{-1}$, a contradiction. Hence, Case (a) must hold.

In conclusion, the following sequence is  $(6E_{0} + 360\delta)$-aligned: \[
(g^{-1} x_{0},  h_{\epsilon n + 1} \Upsilon_{L },  \ldots, h_{2 \epsilon n} \Upsilon_{L }, g^{2} x_{0})
\]
Also,  $(x_{0}, h_{\epsilon n + 1} \Upsilon_{L })$ and $(h_{2\epsilon n} \Upsilon_{L }, gx_{0})$ are  $(6E_{0} + 360\delta)$-aligned. Hence $g \in \mathcal{W}_{L, \epsilon}(n)$.
\end{proof}

We can now finish the proof of Theorem \ref{thm:mainGen}.

\begin{proof}[Proof of Theorem \ref{thm:mainGen}]
We again start by fixing the constants $E_{0}$, $\tau$, $K_{Lip}, F_{0}, L_{1}$. Take $L\ge L_{1}$ large enough such that $\tau L \ge M$.

By Lemma \ref{lem:WUseful}, it suffices to show that for each $\eta>0$ there exists $\epsilon>0$ such that \begin{equation}\label{eqn:target}
\limsup_{n \rightarrow +\infty} \frac{\#\big(B_{S}(n) \setminus \mathcal{W}_{L, \epsilon}(n)\big)}{\#B_{S}(n)} \le \eta.
\end{equation}
To this end, we take \[
\epsilon := \frac{1}{30 (2E_{0} + 4L F_{0} + 5) (\#S)^{E_{0} + 3 L F_{0} + 4}} \cdot \eta.
\] Then by Fact \ref{fact:expGrowth}, Lemma \ref{lem:bad} and Lemma \ref{lem:VMinusW}, \[
\begin{aligned}
\lim_{n \rightarrow +\infty} &\frac{\#B_{S}(0.9n)}{\#B_{S}(n)} = \lim_{n \rightarrow +\infty} \frac{\#\big(\mathcal{V}_{L, 3\epsilon}(n) \setminus \mathcal{W}_{L, \epsilon}(n)\big)}{\#B_{S}(n)}&= 0,\\
\limsup_{n \rightarrow +\infty} &\frac{\#\mathcal{BAD}_{L, 3\epsilon}(n)}{\#B_{S}(n)} < \eta/2.\\
\end{aligned}
\]
Moreover, we have \[\begin{aligned}
B_{S}(n) \setminus \mathcal{W}_{L, \epsilon}(n) &\subseteq B_{S}(0.9n) \cup \Big(\big(B_{S}(n) \setminus B_{S}(0.9n)\big)\setminus  \mathcal{W}_{L, \epsilon}(n)\Big) \\
&\subseteq B_{S}(0.9n) \cup \Big(B_{S}(n) \setminus \big(B_{S}(0.9n) \cup \mathcal{V}_{L, 3\epsilon}(n) \big)\Big) \cup \big(\mathcal{V}_{L, 3\epsilon}(n) \setminus  \mathcal{W}_{L, \epsilon}(n) \big) \\
&= B_{S}(0.9n) \cup \mathcal{BAD}_{L, 3\epsilon}(n) \cup \big(\mathcal{V}_{L, 3\epsilon}(n) \setminus  \mathcal{W}_{L, \epsilon}(n) \big).
\end{aligned}
\]
Hence, Equation \ref{eqn:target} holds. 
\end{proof}

\begin{proof}[Proof of Theorem \ref{thm:mainGen}]
We only list additional observations needed for Theorem \ref{thm:mainGen}. For detailed explanations about the notion of principal/triangular/ageometric fully irreducible outer automorphism in $\Out(F_{n})$, refer to \cite{algom-kfir2019stable} and \cite{kapovich2022random}.

By \cite[Example 6.1]{algom-kfir2019stable}, there exists a principal fully irreducible $\varphi \in \Out(F_{N})$. Now \cite[Remark 5.4]{kapovich2022random} provides a \emph{lone axis} $\gamma$ for $\varphi$, which is necessarily a periodic greedy folding line. Further, every fully irreducibe $g \in \Out(F_{N})$ has  a simple (periodic) folding axis.

Pick a basepoint $x_{0} \in \mathcal{FF}_{N}$. For now, let us denote the projection map from the Outer space $CV_{N}$ to $\mathcal{FF}$ by $\Pi$. Then \cite[Proposition 8.1]{kapovich2022random} guarantees that: 

\begin{fact}\label{fact:kapo}
There exists $M_{0}>0$ such that the following holds. If $g$ is a fully irreducible and if the $d_{\mathcal{FF}}$-nearest point projections of $g^{-i}x_{0}$ and $g^{i} x_{0}$ onto $[\varphi^{-i} x_{0}, \varphi^{i} x_{0}]_{\mathcal{FF}}$ is at least $M_{0}$-apart, the first projection coming first, then $g$ is ageometric and triangular.
\end{fact}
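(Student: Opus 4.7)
The plan is to exploit two features of the principal fully irreducible $\varphi$: its \emph{lone axis} $\gamma$ in the Outer space $CV_{N}$, whose fold sequence is rigid (unique up to periodic reparametrization), and the fact that the projection $\Pi : CV_{N} \to \mathcal{FF}_{N}$ sends $\gamma$ to a parametrized quasigeodesic in the Gromov hyperbolic space $\mathcal{FF}_{N}$. The hypothesis concerns only the coarse geometry of $\mathcal{FF}_{N}$-projections, whereas the conclusion is a fine combinatorial statement about $g$'s train track representative; the bridge between them will be to push the $\mathcal{FF}_{N}$ hypothesis up to a fellow-traveling statement inside $CV_{N}$, and then to use the lone-axis rigidity to transfer combinatorial data from $\varphi$ to $g$.

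First I would carry out the hyperbolic-space extraction. By Fact \ref{fact:projectionCoarse}(2) applied in $\mathcal{FF}_{N}$ (which is $\delta_{\mathcal{FF}}$-hyperbolic by \cite{bestvina2014hyperbolicity}), choosing $M_{0}$ larger than $20\delta_{\mathcal{FF}}$ forces $[g^{-i} x_{0}, g^{i} x_{0}]_{\mathcal{FF}}$ to contain a subsegment $\eta_{i}$ that $20\delta_{\mathcal{FF}}$-fellow travels a subsegment $\kappa_{i}$ of $\Pi(\gamma)$ with matching orientation, and the length of $\eta_{i}, \kappa_{i}$ tends to infinity with $i$. Since $g$ is fully irreducible, $\{g^{j} x_{0}\}_{j \in \Z}$ is a quasigeodesic in $\mathcal{FF}_{N}$ that fellow-travels the $\Pi$-image of the simple periodic folding axis $L_{g}$ of $g$ provided in the excerpt; hence I obtain a long $\mathcal{FF}_{N}$ overlap between the $\Pi$-images of $\gamma$ and of $L_{g}$ with matching orientation.

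Next I would lift this $\mathcal{FF}_{N}$ overlap back to $CV_{N}$. The projection/contraction estimates of Bestvina--Feighn for folding paths (with refinements by Dowdall--Taylor) compare distances between folding axes in $CV_{N}$ to distances between their $\Pi$-images: two periodic folding axes whose $\Pi$-images run parallel for a sufficiently long $\mathcal{FF}_{N}$-distance must, after reparametrization, fellow-travel in $CV_{N}$ up to a uniformly bounded pre- and post-composition of folds that are invisible to $\Pi$. Now the lone axis property of $\gamma$ enters decisively: since $\gamma$ is the unique axis in its axis bundle, any periodic folding subpath that coincides with a long piece of $\gamma$ up to the above controlled ambiguity must share with $\gamma$ the combinatorial data that defines the axis type, namely the illegal-turn structure, the local Whitehead graphs at the principal vertices, and the absence of a periodic Nielsen path. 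Propagating this matching along $L_{g}$ by $g$-periodicity forces $g$ to have the same principal-vertex data as $\varphi$, so all local Whitehead graphs of $g$ are triangles (making $g$ triangular) and no periodic Nielsen path is present (making $g$ ageometric).

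The hard part will be the lift from $\mathcal{FF}_{N}$-closeness to $CV_{N}$-closeness along a folding path. The map $\Pi$ collapses a great deal of folding activity, and two folding axes that are close in $\mathcal{FF}_{N}$ can a priori differ by long stretches of ``inessential'' folds invisible to free factor projections. Quantitatively controlling this requires the projection/contraction lemma for folding axes in \cite{bestvina2014hyperbolicity} together with the rigidity of lone axes developed by Mosher--Pfaff, and $M_{0}$ must be calibrated to exceed both the additive error in that projection lemma and the periodicity scale of $\gamma$. I expect the careful bookkeeping of these constants, together with the identification of which combinatorial data of a folding path is determined by its $\mathcal{FF}_{N}$-shadow on a sufficiently long arc, to be the main technical content, which is precisely what is done in \cite[Proposition 8.1]{kapovich2022random}.
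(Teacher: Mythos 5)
Your proposal and the paper's proof both rest, in the end, on \cite[Proposition 8.1]{kapovich2022random}; the difference is in what each treats as the content of the argument. The paper uses that proposition as a black box: its entire proof is the observation that the hypothesis there, phrased in terms of the projection $\operatorname{Pr}_{\gamma}$ to the lone axis, can be replaced by the $d_{\mathcal{FF}}$-nearest-point projection onto $\Pi(\gamma)$ via \cite[Lemma 4.2]{dowdall2017the-co-surface}, together with the fact that $[\varphi^{-i}x_{0}, \varphi^{i}x_{0}]_{\mathcal{FF}}$ uniformly fellow-travels subsegments of $\Pi(\gamma)$ that exhaust $\gamma$ as $i \to \infty$. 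You instead sketch the internals of the cited proposition (long overlap of axes in $\mathcal{FF}_{N}$, lift to $CV_{N}$ via the Bestvina--Feighn/Dowdall--Taylor contraction estimates for folding lines, transfer of Whitehead-graph and Nielsen-path data via lone-axis rigidity) and then explicitly defer the technical work back to the same citation. That sketch is a fair account of how the proposition is proved, but as a proof of the Fact it bottoms out in the same reference the paper uses while omitting the one piece of work the paper actually supplies, namely reconciling the projection appearing in the hypothesis here with the one appearing in the cited result. Two smaller inaccuracies: your claim that the overlap lengths of $\eta_{i}, \kappa_{i}$ tend to infinity with $i$ does not follow from the hypothesis, which gives only the uniform lower bound $M_{0}$ (a uniform bound is all that is needed, and indeed is the whole point of asking for a single $M_{0}$ -- the constant must be calibrated once, not improved as $i$ grows); and Fact \ref{fact:projectionCoarse}(2) yields fellow-traveling with a subsegment of the geodesic $[\varphi^{-i}x_{0}, \varphi^{i}x_{0}]$, so passing from there to a subsegment of $\Pi(\gamma)$ already requires the fellow-traveling statement the paper invokes, which your first step quietly assumes.
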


The original \cite[Proposition 8.1]{kapovich2022random} is formulated in terms of $\operatorname{Pr}_{\gamma}$, but this can be replaced with the $d_{\mathcal{FF}}$-nearest point projection onto $\Pi(\gamma)$ by \cite[Lemma 4.2]{dowdall2017the-co-surface}. Furthermore, $[\varphi^{-i} x_{0}, \varphi^{i} x_{0}]_{\mathcal{FF}}$ uniformly fellow travels with subsegments $\Pi(\gamma_{i})$ of $\Pi(\gamma)$, where $\gamma_{i}$ exhausts $\gamma$ as $i$ tends to infinity. This justifies the reformulation.

Given Fact \ref{fact:kapo}, we take $M>M_{0}$ and run the proof of Theorem \ref{thm:mainWPD}: for each $\eta>0$ there exists $\epsilon>0$ such that $\mathcal{W}_{L, \epsilon}$ has asymptotic density $\ge 1-\eta$. For this $\epsilon$, elements of $\mathcal{W}_{L, \epsilon}(n)$ for large enough $n$ satisfy the assumption of Fact \ref{fact:kapo} by Lemma \ref{lem:WUseful}. Hence, $\mathcal{W}_{L, \epsilon}(n)$ consists of ageometric triangular fully irreducibles for large enough $n$.  By shrinking $\eta$, we conclude Theorem \ref{thm:mainGen}.
\end{proof}

%
%

\medskip
\bibliographystyle{alpha}
\bibliography{acyl}

\end{document}